\newtheorem{ass}{Assumption}
\newtheorem{rmk}{Remark}
\def\b{}
\def\s{\hat{s}}
\def\e{\varepsilon}
\def\sgn{\mbox{sgn}}
\def\minmod{\mbox{minmod}}
\newcommand{\esssup}{\text{esssup}}
\newcommand{\essinf}{\text{essinf}}
\title{A locally gradient-preserving reinitialization for level set functions}
\author{Lei Li\thanks{Department of Mathematics, University of Wisconsin-Madison, Madison, WI 53706 (Emails: \email{leili@math.wisc.edu}, \email{xxu@math.wisc.edu}, \email{spagnolie@math.wisc.edu}). The second author's research was supported by NSF-DMS grant 1159133.}\and Xiaoqian Xu\footnotemark[1]  \and Saverio E. Spagnolie\footnotemark[1] }
\begin{document}
\maketitle
\slugger{sisc}{xxxx}{xx}{x}{x--x}

\begin{abstract}
The level set method commonly requires a reinitialization of the level set function due to interface motion and deformation. We extend the traditional technique for reinitializing the level set function to a method that preserves the interface gradient. The gradient of the level set function represents the stretching of the interface, which is of critical importance in many physical applications. The proposed locally gradient-preserving reinitialization (LGPR) method involves the solution of three PDEs of Hamilton-Jacobi type in succession; first the signed distance function is found using a traditional reinitialization technique, then the interface gradient is extended into the domain by a transport equation, and finally the new level set function is achieved with the solution of a generalized reinitialization equation. We prove the well-posedness of the Hamilton-Jacobi equations, with possibly discontinuous Hamiltonians, and propose numerical schemes for their solutions. A subcell resolution technique is used in the numerical solution of the transport equation to extend data away from the interface directly with high accuracy. The reinitialization technique is computationally inexpensive if the PDEs are solved only in a small band surrounding the interface. As an important application, the LGPR method will enable the application of the local level set approach to the Eulerian Immersed boundary method. 
\end{abstract}

\begin{keywords}
Level set function; Reinitialization; Interface gradient; Eulerian Immersed Boundary Method; Discontinuous Hamiltonian
\end{keywords}

\section{Introduction}
The level set method \cite{os88, of01} is a classical framework used to accurately and elegantly evolve Lagrangian interfaces over a fixed Eulerian grid. It has seen very wide application in numerous fields, from fluid-structure interactions (e.g., lipid vesicles \cite{sm11}, bubbles \cite{chmo96}, two-phase flows \cite{sso94}) to image processing \cite{ms95}, computational geometry \cite{sethian1999level}, computer vision \cite{sethian1999level}, and materials science \cite{lzg99,sethian1999level}. The level set method involves the tracking of a level set function $\phi$, a continuous function with the property that its zero level set $\Gamma=\{x: \phi(x)=0\}$ represents the Lagrangian interface (for instance, the boundary between two fluid phases or an immersed elastic structure). However, if the interface is deformed by a velocity field, for instance, then the gradient of the associated level set function, $\nabla\phi$, may grow unbounded in the process. To reduce the associated numerical error the level set function is commonly reinitialized. Even if the boundary is not highly deformed, when a local level set method \cite{as95,pmozk99} is applied to reduce computational costs, reinitialization is required if the interface encroaches the boundary of the thin computational tube.

For many applications, only the position and curvature of the interface are needed, and the level set function $\phi$ after each reinitialization may be chosen to be a signed distance function \cite{sso94, chmo96, ms95}. For example, in the simulation of elastic structures immersed in a fluid, if the tension is assumed constant (see \cite{chmo96}) then the force depends only on the curvature of the interface so that the signed distance function contains sufficient information. However, in the Eulerian immersed boundary method \cite{cm04,cm06}, $|\nabla\phi|_{\Gamma}$ represents the stretching of the elastic structure. Consequently, the elastic forces depend on $|\nabla\phi|$ at the interface and the signed distance function cannot be used to compute these forces. One solution to this problem, shown by Cottet \& Maitre, is to avoid reinitialization altogether and to instead to renormalize with a particular approximation of the Dirac delta function used in interface capture \cite{cm06}. However, there are situations in which this strategy is inadequate. For instance, it would not be effective in the local level set approach to the Eulerian immersed boundary method.

In this paper we develop a method for reinitializing the level set function that locally preserves its gradient near the Lagrangian interface. The proposed locally gradient-preserving reinitialization (LGPR) method involves the solution of three Hamilton-Jacobi equations in succession; first the signed distance function is found using the traditional reinitialization technique, then the cost function is obtained by extending the interface gradient into the domain by a transport equation, and finally the new level set function is achieved by the solution of a generalized reinitialization equation with the cost function obtained in the previous step. The steady reinitialization equation is an Eikonal equation with the cost function discontinuous at the cut locus of the interface. We show that the "proper" viscosity solution (to be defined) of the Eikonal equation exists and is unique. We also prove that the viscosity solution that vanishes at the interface of the reinitialization equation converges to this proper viscosity solution and hence it is the level set function desired. We then propose numerical schemes for their fast and accurate solution. As an important application, the LGPR method will enable the application of the local level set approach to the Eulerian Immersed boundary method, which may then be comparable in cost with the classical immersed boundary method of Peskin \cite{peskin02}, but with improved stability. 

LGPR consists of some equations that are very similar with some well-studied equations in literature. Motivated by those results, some theoretic results are new in this paper.  For example, about the Eikonal equation with boundary conditions imposed on the outer boundary, some results are available for some discontinuous cost functions. Due to the assumptions imposed in those references, the results can't be applied to our case for the proof of the uniqueness. We provide a new proof for the uniqueness for our special Eikonal equation. \deleted{That no cusps with zero angles in the cut locus is interesting, though we don't use this result to show the uniqueness.} The formula for the solution of the general reinitialization equation, as far as we know, is new, though the generalization from existing results is not hard.  The numerical schemes are combinations of modified versions of some well-known methods except that we propose a new upwind scheme for the transport equation for extending quantities out from the interface. 

The paper is organized as follows. In \S\ref{sec:setup} we present the sequence of PDEs involved in locally gradient-preserving reinitialization. In \S\ref{sec:theory} we show the theoretical results, and give explicit formulas for viscosity solutions. Numerical schemes for solving the equations are the topic of \S\ref{sec:numerics}, and a few illustrative examples are provided in \S\ref{sec:exp}. We conclude with a brief summary in \S\ref{sec:conc}. Proofs for several claims made throughout the paper about the cut locus, existence and uniqueness of the proper viscosity solution of the Eikonal equation with a discontinuous cost function, and other issues are included in the appendix.
 
\section{Problem setup and reinitialization method}\label{sec:setup}
We begin by describing in more detail the motivation and setup of the problem, and presenting the locally gradient-preserving reinitialization method. For the sake of presentation, we will consider as a model problem a closed one-dimensional elastic interface embedded in $\mathbb{R}^2$, though the method could be extended into cases with several closed interfaces or higher dimensions without conceptual difficulty.

Suppose $\phi$ is a level set function such that the zero level set $\Gamma$ agrees with the interface $X(\xi, t)$, where $\xi$ is a Lagrange coordinate and $t$ is time. Assume that $\phi>0$ inside $\Gamma$ and $\phi<0$ outside $\Gamma$. In \cite{cm04}, it was shown that $|\nabla\phi(X(\xi, t), t)|/|X_{\xi}(\xi, t)|=\alpha(\xi)$ is independent of $t$ when $\phi$ is convected by the velocity field, and thus if $\phi$ is constructed initially such that $\alpha=1$, $|\nabla\phi|_{\Gamma}$ measures the tangential stretching (or compression) of the interface. Generically, such an elastic structure responds energetically to both bending and stretching deformations. The elastic force due to interface bending depends on the curvature $\kappa=-\nabla\cdot \b{\hat{n}}$, where $\b{\hat{n}}=\nabla\phi/|\nabla\phi|$ is the inward-pointing normal vector at the interface, which is unchanged under any reinitialization scheme that preserves the location of the level set. The elastic force at a point $x$ due to interface stretching, however, is given by (\cite{cm04}):
\begin{multline}\label{eq:F_elastic}
\b{F}(x)=\nabla\left(E'(|\nabla\phi|)\right)|\nabla\phi|\delta(\phi)-\nabla\cdot\left(E'(|\nabla\phi|)\frac{\nabla\phi}{|\nabla\phi|}\right)\nabla\phi \,\delta(\phi)\\
=\kappa\,E'(|\nabla\phi|) \nabla\phi \,\delta(\phi)+E''(|\nabla\phi|)\b{\hat{n}}\cdot\nabla \nabla\phi\cdot(\b{I-\hat{n}\hat{n}})|\nabla\phi|\delta(\phi)
\end{multline}
where $\b{I}$ is the identity operator, $\hat{n}\hat{n}$ is a dyadic product and $E(\cdot)$ is the elastic energy due to stretching. The first term in \eqref{eq:F_elastic} is a force due to a curved interface under a certain tension, while the second term is due to tension gradients along the interface. We introduce the stretch function
\begin{gather}
\chi(x)=|\nabla\phi|(x), \ \  x\in\Gamma
\end{gather}
defined on the interface (time dependence is ignored). Stretching occurs in regions where $\chi>1$, and compression occurs where $\chi<1$. In the above, we require two quantities that may be tied to the gradient of the level set function: $\chi(x)$ and $\b{\hat{n}}\cdot\nabla\nabla\phi\cdot(\b{I-\hat{n}\hat{n}})|_{\Gamma}=D_s \chi(\Gamma(s))\s$, where $s$ is the arc length parameter, $D_s=d/ds$ and $\s=\Gamma'(s)$ is the unit tangent vector along the surface $\Gamma$.

In the process of the convection of the interface, $\nabla\phi$ may have become unbounded (usually away from the interface), or the zero level set may have drifted towards the boundary of a tube in the local level set method. In this situation it is necessary to find a new level set function that is better behaved. So as to leave the elastic force unchanged during this process, the stretch function $\chi$ must be preserved during reinitialization. In theory preserving $\chi(\Gamma(s))$ is sufficient, but in numerical application we must also ensure that its tangential derivative is accurately preserved. We now formulate the reinitialization problem in a more mathematical way. 

\subsection{Locally gradient-preserving reinitialization}
Suppose that $\phi_0$ is a uniformly continuous level set function, $C^1$ on $\Gamma=\{x:\phi_0(x)=0\}$ with $x\in \mathbb{R}^2$, but not necessarily $C^1$ elsewhere. $\phi_0$ is assumed to be positive inside the interface $\Gamma$ and negative outside $\Gamma$. In addition we assume that $\Gamma$ satisfies:
\begin{ass}\label{ass:gamma}
$\Gamma$ is a closed, nonintersecting $C^1$ curve which can be decomposed into several segments, each of which is locally analytical throughout (including at the segment endpoints). 
\end{ass}

Consider an arc-length parameterization of the interface on one such segment, $\Gamma(s):[a,b]\to\mathbb{R}^2$. $\Gamma$ is locally analytical if for every $s_0\in[a,b]$, there is a number $\e>0$, so that the Taylor series of $\Gamma$ about $s_0$ converges to $\Gamma$ in $(s_0-\e, s_0+\e)\cap [a,b]$. That the segment endpoints are also assumed to be analytical (one-sided) removes certain pathological behaviors \cite{ccm97}. The assumption on $\Gamma$ makes physical sense for practical interfaces.

We denote by $U$ the open domain enclosed by $\Gamma$. The stretch function
\begin{gather}
\chi(x)=|\nabla\phi_0|(x), \ \  x\in\Gamma
\end{gather}
is assumed to satisfy: $\chi(\Gamma(s))$ is continuous and the derivative $D_s\chi(\Gamma(s))$ is piecewise continuous. We assume $0<c_1\leq \chi(\Gamma(s))\leq c_2$ for two constants $c_1,c_2$, which is a physically relevant constraint since the stretching deformation is generally bounded when the material is elastic.  We aim to find a new level set function $\phi$ which is Lipschitz continuous (the gradient is bounded), smooth $(C^1)$ in a local band around $\Gamma$, and in particular, preserves the interface gradient, $|\nabla\phi|(x\in\Gamma)=\chi(x\in\Gamma)$. 

We are then led to the Eikonal equation,
\begin{gather}\label{eq:eikonal}
\begin{array}{c}H(x,\nabla \phi)=\sgn(\phi_0)(|\nabla \phi|-f(x))=0 \ \ \ x\in \mathbb{R}^2,\\
\phi(x)=0,\ x\in \Gamma,\end{array}
\end{gather}
for some suitable $f$ that has the boundary condition $f(x)=\chi(x)$ for $x\in\Gamma$. Here $H(x,p)=\sgn(\phi_0(x)) (|p|-f(x))$ is the Hamiltonian. The sign function $\sgn(\phi_0)$ connects the level set function to the so-called viscosity solution, as will be discussed in the next section. While $f$ is known on $\Gamma$, part of the reinitialization process will be first to extend $f$ away from the interface and into the larger domain. 

In the traditional reinitialization procedure, the new level set function $\varphi$ is the signed distance function which is recovered by solving numerically a Hamilton-Jacobi (H-J) equation \cite{sso94},
\begin{gather}\label{eq:dis}
   \begin{array}{c} \displaystyle \frac{\partial\varphi}{\partial\tau}+\sgn(\phi_0)(|\nabla\varphi|-1)=0,\\
    \ \ \ \varphi(x,0)=\phi_0(x),\end{array}
\end{gather}
which is inadequate in our effort to preserve the interface stretch information.

Instead, we propose continuing the process by two extra steps to find a new function $\phi$ that shares its gradient with $\phi_0$ locally near $\Gamma$. First, we extend $f(x\in\Gamma)=\chi(x\in\Gamma)$ from the interface out into the whole domain along the characteristic lines of the signed distance function by a transport equation,
\begin{gather}\label{eq:extension}
\begin{array}{c}   \displaystyle\frac{\partial f}{\partial \tau}+\sgn(\varphi) \displaystyle\nabla\varphi\cdot\nabla f=0,\\
    f(x\in\Gamma,\tau)=\chi(x). \end{array}
\end{gather}
The desired level set function is then obtained by solving a generalized reinitialization equation,
\begin{gather}\label{eq:levelfunc}
\begin{array}{c} \displaystyle\frac{\partial\phi}{\partial\tau}+\sgn(\phi_0)(|\nabla\phi|-f(x))=0,\\
 \phi(x,0)=\phi_0(x),\end{array}
\end{gather}
For the remainder of the paper, references to the ``reinitialization equation'' are to \eqref{eq:levelfunc}; the earlier equation, \eqref{eq:dis}, a special case of \eqref{eq:levelfunc}, will be referred to as the traditional reinitialization equation. For convenience we have abused the notation for $f(x,\tau)$ and the steady cost function $f(x)$, and similarly $\phi(x,\tau)$ and $\phi$. Whether we mean the steady solution or the pseudo-time dependent solution should be clear by the context. We refer to \eqref{eq:dis}-\eqref{eq:levelfunc} as the locally gradient-preserving reinitialization (LGPR) method.

The LGPR method proposed above is straight-forward and there are no immediately apparent complications, but it is not obvious that the solution of \eqref{eq:levelfunc} converges to the solution of the Eikonal equation, \eqref{eq:eikonal}, or even if it exists since the cost function $f$ developed with the transport equation in \eqref{eq:extension} may be discontinuous.  However, as we will show in the following section, the solutions so obtained are well-defined so that the reinitialization method presented here may become a basis for fast, accurate local level set methods.  We will also numerically determine how to preserve the interface gradient $\chi$ so that $D_s\chi(\Gamma(s))$ can be recovered accurately. \\
\begin{rmk}
When $f(x)$ is known in the whole domain, the Eikonal equation, Eq~\eqref{eq:eikonal} can be solved using, for instance, the Fast Marching Method (FMM) \cite{sethian96} or the Fast Sweeping Method (FSM) \cite{zhao04}. This is one approach for initializing an original level set function, and could also be used as an alternative basis for reinitialization.
\end{rmk}\\

\section{Theoretical results}\label{sec:theory}
In this section we will show that the LGPR equations described in the previous section are well-posed. Specifically, we will show that the cost function $f(x)$ is continuous outside of a closed set consisting of arcs and vertices and that the Eikonal equation has a unique ``proper'' solution (to be clarified later) given the function $f(x)$ produced using the transport equation, \eqref{eq:extension}. A formula for $\phi(x,\tau)$  is derived,  which is found to converge to the ``proper'' solution of the Eikonal equation \eqref{eq:eikonal} in finite time. Thus, \eqref{eq:levelfunc} is shown to be equivalent to \eqref{eq:eikonal}. 

Note that there are some results about the Eikonal equation or some Hamilton-Jacobi equations with discontinuous Hamiltonians, which can't applied to our case, as we will see later. For the Eikonal equation, one can find some results in  \cite{ostrov00,soravia02, soravia06, de04, ff14}. In these references, one can check that their assumptions don't apply to the cost function generated by \eqref{eq:extension}. In \cite{ai02}, the equation $u_t+\sgn(u_0)H(\nabla u)=0$ has been studied but \eqref{eq:levelfunc} doesn't belong to this class. In our theoretical  results, the proof for the uniqueness of the Eikonal equation is new and the formula for $\phi(x,\tau)$ as far as we know, is derived for the first time, though it is not hard to obtain this formula from existing results.

\subsection{The Eikonal equation}
The Eikonal equation, \eqref{eq:eikonal}, is a Hamilton-Jacobi equation with Hamiltonian $H(x,p)=\sgn(\phi_0(x)) (|p|-f(x))$, and $f$ is called the cost function. We first introduce the definition of viscosity solutions (see \cite{cl83, Ishii85}), then we study the continuity (and regions of discontinuity) of $f$ from its development by \eqref{eq:extension}, and finally explore the associated solutions of the Eikonal equation.

\subsubsection{Viscosity solutions of the Eikonal equation}\label{sec:Viscosity solutions of the Eikonal equation} In the general setting of the Eikonal equation, solutions need not exist in the classical sense. Instead, solutions are developed in a weaker sense; specifically, a viscosity solution is defined as follows. \\

\begin{definition}\label{def:vs1}
A viscosity sub-solution (super-solution) of $H(x,\nabla \phi)=0$ is an upper semi-continuous function (a lower semi-continuous function), if for any $C^{\infty}$ function $\zeta$, when $\phi-\zeta$ has a local maximum (minimum) at $x_0$ which is an interior point, then $H_*(x,\nabla\zeta(x_0))\leq0$ $(H^*(x,\nabla\zeta(x_0))\geq0)$. A viscosity solution is a continuous function that is both a sub- and super-solution.
\end{definition}\\

In this definition, $H^*(x, p)=\lim_{r\to 0}\esssup \{H(y, q)|\ \|(y,q)-(x,p)\|_{L^2}\leq r \}$ is the sup-envelope, and $H_*$ is similarly defined to be the inf-envelope. 

For example, consider $|u'|=1, x\in(-1,1)$ with $\Gamma=\{-1,1\}$ (see exercises in \cite{evans10}) whose viscosity solution is $u=1-|x|$ for $x\in[-1,1]$. The viscosity solution of $-|u'|=-1, x\in(-1,1)$ with $\Gamma=\{-1,1\}$ is $u=|x|-1$ for $x\in[-1,1]$. From this definition, we see why $\sgn(\phi_0)$ appears in the Eikonal equation: the viscosity solution of $|\nabla\phi|-f=0$ can only have kinks pointing up while the viscosity solution of $f-|\nabla\phi|=0$ can only have kinks pointing down. If we write the Eikonal equation as $|\nabla\phi|=f$, for $\Gamma$ that is not convex, $\phi$ that is negative outside $\Gamma$ may have kinks pointing down and this $\phi$ is not a viscosity solution to $|\nabla \phi|=f$. A common error in the numerical literature is that the signed distance function associated with a non-convex curve is treated as a viscosity solution of $|\nabla\phi|=1$.

It is natural to decompose the Eikonal equation, \eqref{eq:eikonal}, into interior ($x\in U$) and exterior ($x \in \mathbb{R}^2\setminus \bar{U}$) problems, and to piece the two solutions together. If $f$ is continuous at $\Gamma$, which it is as we will show, then the interior and exterior solutions together form a viscosity solution over the entire domain (since the equation is also then satisfied on the interface $\Gamma$). We therefore focus on the interior and exterior problems separately.

The interior problem has been solved by other authors for continuous $f$ with $\inf f>0$, and existence and uniqueness have been established \cite{cl83, Ishii87, koike04}. An integral representation of $\phi$ is given by:
\begin{gather}
\phi(x)=\inf_{\gamma\in \mathscr{C}}\Big\{
\int_0^Lf(\gamma(s))ds \,\Big|\gamma(0)=x, \gamma(L)\in\Gamma \Big\},
\end{gather}
(see  \cite{lions82, ostrov00}), where $\mathscr{C}$ is the space of absolutely continuous self-avoiding curves, $s$ is the arc-length parameter, and $L$ is the total arc-length (which depends on $\gamma$). 

In the exterior problem, however, even if $f$ is continuous and $\inf f>0$, uniqueness is not guaranteed. For example, both $\phi_1(x)=||x|-1|$ and $\phi_2(x)=1-|x|$ where $x\in\mathbb{R}$ are viscosity solutions for $\sgn(1-|x|)(|\phi'(x)|-1)=0$. This motivates the following definition:\\
\begin{definition}\label{def:vs2}
The ``proper'' viscosity solution of the Eikonal equation, \eqref{eq:eikonal}, is defined to be the pointwise limit of $\phi_n$ as $n\to\infty$, where $\phi_n$ is the viscosity solution satisfying \eqref{eq:eikonal} in the sense of Definition \ref{def:vs1} in $\{|x|<n\}$ with $\phi_n(|x|=n)=0$ for $n\in \mathbb{Z}$ and $n>\max_{y\in\Gamma}d(0,y)$. 
\end{definition}\\

Here $d(E_1,E_2)=\inf_{x\in E_1,y\in E_2}\|x-y\|$ is the distance between two sets $E_1$ and $E_2$. Under this definition, for continuous $f>0$, $\phi$ is the limit of a sequence of interior problems and is therefore uniquely determined (see the previous discussion). Such a $\phi$ is also a viscosity solution in the general sense of Definition \ref{def:vs1}. A limit of the integral representation of $\phi_n$ as $n\to \infty$ reveals the viscosity solution for all $x\in\mathbb{R}^2$ with continuous $f>0$, so that a representation of $\phi(x)$ for all $x$ is given by
\begin{gather}\label{eq:valuefunc}
\phi(x)=\sgn(\phi_0)\inf_{\gamma\in \mathscr{C}}\Big\{
\int_0^Lf(\gamma(s))ds\, \Big|\gamma(0)=x, \gamma(L)\in\Gamma\Big \}.
\end{gather}

Unfortunately, while continuous on $\Gamma$, it is not guaranteed that $f$ obtained by the transport equation, \eqref{eq:extension}, is continuous in the whole domain. Before proceeding any further, we must therefore understand the nature of $f$ obtained using \eqref{eq:extension}.

\subsubsection{The cost function} Due to the method for extending $f$ into the whole domain using \eqref{eq:extension}, the behavior of the cost function $f$ is intimately linked to the behavior of $\varphi$. Aujol \& Aubert \cite{ai02} have shown that the viscosity solution of \eqref{eq:dis} that satisfies $\varphi(x\in\Gamma, \tau)=0$ converges to the steady solution, which is the proper viscosity solution (see Definition \ref{def:vs2}) of $\sgn(\phi_0)(|\nabla\varphi|-1)=0$. By \eqref{eq:valuefunc}, since the cost function in this case is $1$, $\varphi$ is the signed distance function:
\begin{gather}
\varphi(x)=\begin{cases} d(x,\Gamma) &  x\in \bar{U},\\ -d(x,\Gamma) & x\in\mathbb{R}^2\setminus\bar{U}.\end{cases}
\end{gather}
 $\varphi$ is therefore 1-Lipschitz continuous, and hence differentiable almost everywhere (a.e.). Of particular importance is the singular set of $\varphi$, which is most conveniently uncovered by studying a projection to the interface:\\

\begin{definition} \label{def:locus}
$Px=\{y\in \Gamma | d(x,\Gamma)=d(x,y)\}$ is the nonempty projection of $x$ onto $\Gamma$. Let $A=\{x | \#Px\ge2\}$ be the set of points for which the distance is achieved at multiple boundary points. The part of $A$ inside of $\Gamma$ is called the medial axis, and its closure $\bar{A}$ is called the cut locus. The skeleton $S$ is the set of centers of maximal circles (with order defined by inclusion) inside $\Gamma$.
\end{definition}\\

First note that by the $C^1$ assumption on $\Gamma$ we have that the distance between the cut locus and the interface is always positive, $d(\bar{A}, \Gamma)>0$. For $x\notin \bar{A}\cup\Gamma$, we have that
\begin{gather}
\nabla d(x,Px)=\frac{x-Px}{|x-Px|},
\end{gather}
since $Px$ and $d(x, Px)$ are both differentiable at such a point, and since $P(tx+(1-t)Px)=Px$ for $0<t<1$. Therefore, $\nabla\varphi=\sgn(\phi_0)\nabla d(x, Px)$ is continuously extended to $\Gamma$, and thus $\varphi$ is $C^1$ at $\Gamma$. Moreover, the line $x-Px$ is a characteristic line of $\varphi$ due to its alignment with $\nabla d$. Therefore, $f$ is constant along the line $x-Px$ by \eqref{eq:extension}; in addition, $f(x,\tau)$ is steady when $\tau>d(x, \Gamma)$ and $f(x)=\chi(Px)$. $f$ is continuous at $x$ since $Px$ is. $f$ is thus continuous for all $x\notin\bar{A}$ and is given by $f(x)=\chi(Px)$.

Having shown $f$ to be continuous everywhere outside of the cut locus, we are left now to explore $x\in \bar{A}$. It is well known that $\varphi(x)=\sgn(\phi_0)d(x,Px)$ is not differentiable at any point in $A$. Due to the importance of the result for studying our method, we provided an alternative proof in the appendix for the reference. Also note that $A\cap U\subset S\subset\bar{A}\cap U$ \cite{lieutier04}.  By the assumptions on $\Gamma$, the curvature $\kappa$ of $\Gamma$ exists except at possibly a finite number of points, and even at these points the left and right limits of the curvature exist; thus $\sup|\kappa|<\infty$.  When $U$ is convex, this provides an estimate for the distance between the cut locus and the interface: $d(\bar{A}, \Gamma)\ge\inf\{1/\kappa\}$ (the proof can be found in the Appendix A; the chosen convention is that the curvature of a circle is positive). In general, however, there is no estimate for $d(\bar{A}, \Gamma)$. To proceed, we must further investigate the structure of the cut locus $\bar{A}$. To this end, we observe the following:\\
\begin{lemma}\label{lemma:proj}
Let $x\in\mathbb{R}^2$. For any $y\in Px$, let  $z(t)=ty+(1-t)x$, $0<t\le 1$. Then, $Pz(t)=\{y\}$ and $z(t)\notin \bar{A}$. 
\end{lemma}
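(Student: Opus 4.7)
The statement naturally splits into the projection identity $Pz(t)=\{y\}$ and the neighborhood claim $z(t)\notin\bar A$. For the first, my plan is a direct triangle-inequality argument. Setting $d_0=|x-y|=d(x,\Gamma)$, one has $|z(t)-y|=(1-t)d_0$, and for any $y'\in\Gamma$,
\[
|z(t)-y'|\;\ge\;|x-y'|-|x-z(t)|\;\ge\;d_0-td_0\;=\;|z(t)-y|,
\]
where the middle inequality uses $y\in Px$. Equality throughout forces both $|x-y'|=d_0$ and $z(t)\in[x,y']$, which pin $y'$ to the unique point on the ray from $x$ through $z(t)$ at distance $d_0$, namely $y$. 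Hence $Pz(t)=\{y\}$.

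For the second claim, the case $t=1$ is immediate since $z(1)=y\in\Gamma$ and $d(\bar A,\Gamma)>0$ was already established. For $0<t<1$ my plan is to exhibit an open neighborhood of $z(t)$ on which the projection is single-valued, using localization and uniqueness via the implicit function theorem. For localization, take a small arc $V\subset\Gamma$ about $y$; by the projection identity and compactness of $\Gamma\setminus V$, $\inf_{y'\in\Gamma\setminus V}|z(t)-y'|>(1-t)d_0=d(z(t),\Gamma)$, and continuity of $d(w,\cdot)$ forces $Pw\subset V$ for $w$ near $z(t)$. On $V$, parameterize by arc length $\gamma(s)$ with $\gamma(0)=y$; the $C^1$ regularity of $\Gamma$ and $y\in Px$ give $(z(t)-y)\cdot\gamma'(0)=0$, while the second-order optimality condition at the minimizer $y\in Px$ gives $(x-y)\cdot\gamma''(0)\le 1$, so rescaling yields
\[
\partial_s F|_{(0,z(t))}\;=\;-1+(1-t)(x-y)\cdot\gamma''(0)\;\le\;-t\;<\;0,
\]
where $F(s,w)=(w-\gamma(s))\cdot\gamma'(s)$. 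The implicit function theorem then produces a unique smooth critical point $s(w)$ near $s=0$ for $w$ in a neighborhood of $z(t)$, and combined with the localization step this gives $Pw=\{\gamma(s(w))\}$, a singleton. So this neighborhood of $z(t)$ lies in $A^c$, and $z(t)\notin\bar A$.

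The main technical obstacle I anticipate is the case in which $y$ is a segment endpoint in the decomposition from Assumption \ref{ass:gamma}, where $\Gamma$ is only $C^1$ at $y$ and $\gamma''(0)$ may fail to exist. I would handle it by running the same IFT argument separately on the two analytic pieces $\gamma_\pm$, extended slightly past $y$ via one-sided analyticity, to obtain one-sided critical points $s_\pm(w)$ governed by one-sided second-derivative bounds of the same form. The delicate task is to verify that for $w$ near $z(t)$ the two candidate minimizers cannot both lie in the interior of their respective pieces and produce equal distances to $w$; this should follow from the fact that the two squared-distance functions have matching zeroth- and first-order jets at $z(t)$ while their one-sided analytic continuations cannot coincide identically unless the pieces do, in which case $\Gamma$ is already smooth at $y$ and the earlier argument applies directly.
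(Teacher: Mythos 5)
Your argument for $Pz(t)=\{y\}$ and for $z(t)\notin\bar A$ at points where $\Gamma$ is smooth is correct, and it takes a genuinely different route from the paper's. The paper treats $Pz(t)=\{y\}$ as the easy part and handles $z(t)\in\bar A\setminus A$ by contradiction: such a point must be the center of the osculating circle of $\Gamma$ at $y$, so the curvature at $y$ exceeds $1/d(x,y)$ and $\Gamma$ dips strictly inside the circle of radius $d(x,y)$ about $x$, contradicting $y\in Px$. You instead exhibit an entire projection-single-valued neighborhood of $z(t)$ via localization plus the implicit function theorem; this is more self-contained (it does not invoke the focal-point characterization of $\bar A\setminus A$, which the paper asserts without proof) at the cost of more machinery, and your computation $\partial_sF|_{(0,z(t))}\le -t<0$ from the second-order optimality condition at $y$ is exactly the right quantitative input.

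The gap is in the segment-endpoint case, which Assumption~\ref{ass:gamma} genuinely permits and which you correctly flag as the delicate point, but your proposed resolution does not close it. Knowing that the two one-sided squared-distance functions agree to first order at $z(t)$ and that their analytic continuations do not coincide identically only shows that $\{d_+=d_-\}$ is a proper analytic subvariety; that set can still be a curve through $z(t)$, and points on it arbitrarily close to $z(t)$ would lie in $A$ provided the two one-sided critical points sat on the correct sides of $y$. What actually rules this out is a sign observation rather than a jet or identity-of-continuations argument. Writing $F_\pm(s,w)=(w-\gamma_\pm(s))\cdot\gamma_\pm'(s)$ for the two analytic pieces, one has $F_\pm(0,w)=(w-y)\cdot\gamma'(0)=(w-z(t))\cdot\gamma'(0)$, since the pieces share the point $y$ and (by the $C^1$ hypothesis) the tangent $\gamma'(0)$, and since $z(t)-y$ is normal to $\Gamma$ at $y$. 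Because $F_\pm(\cdot,w)$ is strictly decreasing near $s=0$, the one-sided critical points $s_\pm(w)$ produced by your IFT both vanish exactly when $w$ lies on the normal line through $y$ and both carry the sign of $(w-z(t))\cdot\gamma'(0)$ otherwise; hence $s_-(w)<0<s_+(w)$ never occurs. Consequently at most one of the two one-sided minima is attained at an interior point, and when it is, it is strictly smaller than the boundary value $|w-y|$ attained by the other piece, so $\#Pw=1$ throughout a neighborhood of $z(t)$. With that replacement for your final step, the proof is complete.
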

\begin{proof}
The only nontrivial part of this claim is that $z(t)\notin \bar{A}\setminus A$. Suppose $z(t)\in\bar{A}\setminus A$ for some $t$. Then $z(t)$ is the center of the osculating circle of $\Gamma$ at $y$, and the circle centered at $x$ with radius $d(x,y)$ contains strictly a part of $\Gamma$ inside, contradicting the fact that $y\in Px$. 
\end{proof}

\begin{rmk}
Based on this lemma and the assumptions on the boundary curve, we are able to get another interesting geometric property of the set $\bar{A}$: there are no cusps in $\bar{A}$ with zero angles. The proof is provided in the Appendix A. 
\end{rmk}\\

From Lemma \ref{lemma:proj}, we have
that the points in $\bar{A}$ are the terminal points of propagation along the characteristic lines of $\varphi$. Since the characteristics of $\varphi$ meet at the cut locus, $f$ may not be well defined there, so we define
\begin{gather}\label{eq: f_limit}
f(x)=\inf_{y\in Px}\chi(y), \ \ x\in \bar{A}.
\end{gather}

Here notice $f$ is extended to both the interior and exterior of the interface, which means we need to discuss the points both inside and outside of $\Gamma$.

Let's first concentrate on the part inside of $\Gamma$. One may notice that from Theorem 6.2 and Corollary 7.1 of \cite{ccm97}, we have $\bar{S}=\bar{A}\cap U$ is simply connected and the union of finitely many points and finitely many open locally analytical curves. Moreover, for every point on these open locally analytical curves, it has projection of size exactly $2$.
 
Then, let us consider the point outside of $\Gamma$. Suppose $U$ is convex, then $\bar{A}\cap U^c=\emptyset$. Otherwise, $\bar{A}\cap U^c$ may contain curves extending to infinity in general. Let's call  $U_n=B_n\cap \bar{U}^c$ with $B_n=\{|x|<n\}$. Then the Domain Decomposition Lemma in \cite{ccm97} tells us the closure of the skeleton $S_n$ of $U_n$ agrees with the skeleton of $\bar{A}\cap U^c$ inside, for instance, $B_{n/3}$. This means for the points outside of $\Gamma$ but inside of any bounded domain, by the result in \cite{ccm97} we can also get the similar characterization as the points inside of $\Gamma$. In summarize, we have
\begin{lemma}\label{lmm:A}
The set $\bar{A}\cap U$ is simply connected, consisting of finitely many points and open curves that are locally analytical, in addition, every point on those curves has a projection of size $2$. Meanwhile, in any bounded domain the set $\bar{A}\cap U^c$ consists of finitely many points and open curves that are locally analytical as well.
 \end{lemma}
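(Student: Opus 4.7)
The plan is to reduce both statements to known structural theorems for skeletons/medial axes of piecewise analytic planar domains in \cite{ccm97}, with a domain truncation argument used to handle the unbounded exterior.

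First, for the interior statement, I would verify the hypotheses of Theorem 6.2 and Corollary 7.1 of \cite{ccm97} for the bounded domain $U$: by Assumption \ref{ass:gamma}, $\partial U=\Gamma$ is a closed, nonintersecting $C^1$ curve that decomposes into finitely many locally analytic arcs (including at the joining vertices), which is exactly the regularity framework in which those results are stated. The identification $\bar{A}\cap U=\bar{S}$ comes from the inclusions $A\cap U\subset S\subset \bar{A}\cap U$ noted in the text (citing \cite{lieutier04}), together with taking closures. Applying the cited theorems then yields simple connectedness of $\bar{A}\cap U$ and its decomposition into finitely many isolated points (the ``branch vertices'') and finitely many open, locally analytic arcs; Corollary 7.1 supplies the additional claim that on the interior of each such arc the projection $Px$ has cardinality exactly $2$ (points of multiplicity $\ge 3$ being exactly the finitely many vertex points).

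For the exterior statement, the obstacle is that $U^c$ is unbounded, so \cite{ccm97} does not apply directly. I would fix an arbitrary bounded domain $\Omega$, choose $n$ with $\Omega\Subset B_{n/3}$, and work with $U_n:=B_n\cap \bar{U}^c$. The boundary of $U_n$ is the disjoint union of $\Gamma$ and the analytic circle $\partial B_n$, so it again satisfies the piecewise analytic hypotheses of \cite{ccm97}, and Theorem 6.2 / Corollary 7.1 give the structure of the skeleton $S_n$ of $U_n$. The Domain Decomposition Lemma of \cite{ccm97} then identifies $\bar S_n\cap B_{n/3}$ with $(\bar A\cap U^c)\cap B_{n/3}$: any point $x\in B_{n/3}$ whose projection onto $\Gamma$ is non-unique has distance to $\Gamma$ bounded by $\mathrm{diam}(\Gamma)+n/3\ll n$, so the maximal inscribed disk that witnesses $x\in \bar S_n$ cannot touch $\partial B_n$, which forces the local skeleton structures to coincide. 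Transporting the structural description of $\bar S_n$ back through this identification gives that $(\bar A\cap U^c)\cap \Omega$ consists of finitely many points and finitely many open, locally analytic arcs. Since $\Omega$ was arbitrary, the claim in any bounded domain follows.

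I expect the main technical nuisance to be the careful verification that the Domain Decomposition Lemma really gives equality of the skeletons on $B_{n/3}$ (as opposed to mere containment), and in particular that no ``spurious'' skeleton arcs of $U_n$ enter $B_{n/3}$ because of the artificial outer boundary $\partial B_n$. This is handled precisely by the distance-to-boundary estimate above: for $x\in B_{n/3}$, $d(x,\Gamma)\le \mathrm{dist}(0,\Gamma)+n/3$, while $d(x,\partial B_n)\ge 2n/3$, so choosing $n$ large enough makes $d(x,\Gamma)<d(x,\partial B_n)$, and the relevant maximal inscribed disk and its center depend only on $\Gamma$. The rest of the argument is bookkeeping of the $\bar A$ vs.\ $\bar S$ relation and of ``locally analytic'' under the restriction to $\Omega$. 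Note that simple connectedness is asserted only for the interior component; no analogous claim is made (or needed) for the exterior, which is consistent with the fact that $\bar A\cap U^c$ may contain unbounded branches extending to infinity.
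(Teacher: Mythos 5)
Your proposal is correct and follows essentially the same route as the paper: the interior statement is obtained by citing Theorem 6.2 and Corollary 7.1 of \cite{ccm97} together with the identification $\bar S=\bar A\cap U$, and the exterior statement by truncating with $U_n=B_n\cap\bar U^c$ and invoking the Domain Decomposition Lemma of \cite{ccm97} on $B_{n/3}$. The only difference is that you spell out the hypothesis verification and the distance estimate ruling out spurious skeleton arcs from $\partial B_n$, details the paper leaves implicit.
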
\\



For a point in $\bar{A}$ with a projection of size 2, function $f$, defined by (\ref{eq: f_limit}), has limiting values from both sides of the curve (for a proof see the Appendix B). We now have the following theorem, which will be important in investigating existence and uniqueness of the viscosity solutions to the Eikonal equation and the reinitialization equation:\\

\begin{theorem}\label{thm:f}
The function $f$ in (\ref{eq: f_limit}) is bounded by $c_1$ and $c_2$ and is continuous outside the cut locus $(\mbox{in } \mathbb{R}^2\setminus\bar{A})$, and the cut locus is well-separated from the interface, $d(\bar{A},\Gamma)>0$. Except at finitely many points, $f$ has a limit when $x$ approaches $\bar{A}$ from one side of $\bar{A}$.
\end{theorem}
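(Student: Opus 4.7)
The plan is to verify the four assertions separately, using Lemma~\ref{lmm:A} for the structural description of the cut locus and Lemma~\ref{lemma:proj} (together with the Remark that follows it) for the local behavior of the projection. Boundedness is immediate: every value of $f$ equals $\chi(y)$ for some $y\in\Gamma$ (directly from $f(x)=\chi(Px)$ off $\bar A$, and from the infimum definition on $\bar A$), so the hypothesis $c_1\le \chi\le c_2$ passes through unchanged. The separation $d(\bar A,\Gamma)>0$ was already asserted in the discussion preceding Lemma~\ref{lemma:proj}; it follows from Assumption~\ref{ass:gamma}, since the piecewise-analytic $C^1$ curve $\Gamma$ has a uniformly bounded curvature (with well-defined one-sided limits at the finitely many non-$C^2$ points), and any $x\in\bar A$ is the center of a disk simultaneously tangent to $\Gamma$ at two or more points, whose radius is at least the minimum radius of curvature.

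For continuity on $\mathbb{R}^2\setminus\bar A$, I would use upper semicontinuity of the set-valued projection $x\mapsto Px$. At any $x\notin\bar A$, the set $Px$ is a singleton (from the very definitions of $A$ and of $\bar A$, which together with the just-established separation handles the boundary case $x\in\Gamma$ where $Px=\{x\}$). Upper semicontinuity of $P$ with a singleton limit forces ordinary continuity of the single-valued selection there, and continuity of $\chi$ on $\Gamma$ then gives continuity of $f=\chi\circ P$.

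The nontrivial work is the one-sided limit claim. By Lemma~\ref{lmm:A}, the cut locus, restricted to $U$ or to $B_n\cap U^c$, decomposes into finitely many vertex points together with finitely many locally analytic open arcs, and on each such arc every point has $\#Px=2$. Into the exceptional set I put the vertices together with the finitely many points of $\bar A$ whose projection lands on one of the points of $\Gamma$ where $D_s\chi$ jumps. Fix $x_0$ in the interior of an analytic arc of $\bar A$ with $Px_0=\{y_1,y_2\}$. By the Remark following Lemma~\ref{lemma:proj}, the arc has no zero-angle cusp at $x_0$, so a small punctured neighborhood of $x_0$ splits into exactly two connected components (the two sides of the arc). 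Lemma~\ref{lemma:proj} shows that the open segment from $x_0$ to each $y_i$ lies outside $\bar A$; taking a one-sided tubular neighborhood of that segment and invoking the continuity of $P$ off $\bar A$ pins the single-valued projection on one side of the arc to the branch $y_i$ for a single fixed $i$ depending only on the side. Continuity of $\chi$ then delivers the one-sided limit $\chi(y_i)$.

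The main obstacle is this last step, making the side-to-branch correspondence rigorous: one must rule out oscillation of the single-valued projection between $y_1$ and $y_2$ as $x\to x_0$ from a fixed side. The key input is Lemma~\ref{lemma:proj}, which supplies a non-crossing foliation of each side of the arc by characteristic segments terminating near one of the $y_i$, so that "side" and "branch of $P$" are consistently matched; the no-zero-angle-cusp property from the Remark is needed first to ensure that "side" is well-defined locally at $x_0$.
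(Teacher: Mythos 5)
Your treatment of boundedness, of continuity off $\bar{A}$ via upper semicontinuity of the projection, and of the one-sided limits via the non-crossing characteristic segments of Lemma~\ref{lemma:proj} is essentially the paper's argument (the one-sided limit step is carried out in Step 1 of Appendix B, where a small ball about an interior point of an analytic edge is split into two components and the projections from each component are shown to accumulate at exactly one of the two points of $Px$). Two small remarks on that part: at an interior point of one of the analytic arcs of Lemma~\ref{lmm:A} the local two-sidedness is automatic, since a single embedded analytic arc separates a small disk, so you do not need the no-cusp Remark / Theorem~\ref{thm:cusp} there --- the paper deliberately avoids using it; and the extra exceptional points you introduce for jumps of $D_s\chi$ are harmless but unnecessary, since only the continuity of $\chi$ enters into the existence of the limits.

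The genuine gap is in your justification of $d(\bar{A},\Gamma)>0$. You assert that every $x\in\bar{A}$ is the center of a disk tangent to $\Gamma$ whose radius is at least the minimum radius of curvature. That is false for non-convex $\Gamma$: in a dumbbell-shaped domain with a thin neck, the medial-axis point in the middle of the neck is the center of a maximal disk whose radius is half the neck width, which can be made arbitrarily small while $\sup|\kappa|$ stays bounded. This is precisely why the paper proves the estimate $d(\bar{A},\Gamma)\ge\inf\{1/\kappa\}$ only for convex $U$ (Lemma~\ref{lemma:bdd} requires the portion of $\Gamma$ between the two tangency points to be a graph over their chord) and explicitly notes that no such curvature estimate holds in general. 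The curvature bound only controls tangencies at boundary points that are close in arc length; to get positivity you need an additional global ingredient, e.g.: if $x_k\in\bar{A}$ with $d(x_k,\Gamma)\to0$, then either $x_k$ is an osculating-circle center (radius $\ge\inf\{1/\kappa\}$, contradiction) or its two projection points satisfy $|y_k-z_k|\le 2\,d(x_k,\Gamma)\to0$; by compactness and non-self-intersection of $\Gamma$ this forces $y_k,z_k$ to be close in arc length, the short arc between them is then a graph over the chord, and Lemma~\ref{lemma:bdd} yields a curvature blowing up like $1/d(x_k,\Gamma)$, a contradiction. Without some such argument the separation claim --- one of the four assertions of the theorem --- is not established.
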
\\

\subsubsection{Viscosity solutions with a discontinuous cost function}
We may now investigate the solution of the Eikonal equation when the cost function $f$ is discontinuous with properties described in Theorem \ref{thm:f}. The solution is the steady solution of the reinitialization equation and is hence the desired level set function. Existence has been proven in fact for a much broader class of cost functions \cite{de04}. Uniqueness, however, is more challenging. Uniqueness of the Eikonal equation has been shown for cost functions $f$ satisfying certain conditions not applicable to the present case \cite{ostrov00,soravia02, soravia06, de04, soravia2000optimal}, so we must develop uniqueness of the solution particular to the cost function $f$ of the form in Theorem \ref{thm:f}. 

To begin, since $f$ is continuous on $\Gamma$, we can split \eqref{eq:eikonal} into interior and exterior problems, as discussed in \S\ref{sec:Viscosity solutions of the Eikonal equation}. We first consider the interior problem,
\begin{gather}
\begin{array}{c}
|\nabla \phi|-f(x)=0, \ \ \text{if}\ x\in U,\\
\phi(x\in\Gamma)=0.\end{array}
\end{gather}  

We have here that $H_*(x,p)=(|p|-f)_*=|p|-f^*$ and $H^*(x,p)=(|p|-f)^*=|p|-f_*$, where the sup- and inf- envelopes were defined in \S\ref{sec:Viscosity solutions of the Eikonal equation}. By Theorem \ref{thm:f}, $0< c_1\leq f^*, f_*\leq c_2$. $f^*$ is upper semi-continuous (USC) and $f_*$ is lower semi-continuous (LSC). If $f$ is continuous at $x$, $f^*(x)=f_*(x)$ and both $f^*,f_*$ are continuous at $x$. By the way we define $f$ in \eqref{eq: f_limit}, we have $f=f_*$. It is simple to show if $\phi$ is differentiable at $x_0$, that $f_*(x_0)\leq |\nabla\phi|(x_0)\leq f^*(x_0)$ (see \cite{evans10}). This implies that if $\phi$ is Lipschitz (thus differentiable a.e.), then $|\nabla \phi|=f$ outside a set of measure zero.

By approximating $f$ by its sup- and inf-convolutions (see \cite{de04}):
\begin{gather}
f^{\e}(x)=\esssup_{y}\{f(y)-|y-x|^2/\e\},\ \ \ \ 
f_{\e}(x)=\essinf_y\{f(y)+|y-x|^2/\e\},
\end{gather} 
two viscosity solutions of the Eikonal equation \eqref{eq:eikonal} are found:
\begin{gather}
\phi_M(x)=\inf_{\gamma\in \mathscr{C}}\Big\{
\int_0^Lf^*(\gamma(s))ds \Big|\gamma(0)=x, \gamma(L)\in\Gamma \Big\},\label{eq:maxsol}\\
\phi_m(x)=\inf_{\gamma\in \mathscr{C}}\Big\{
\int_0^Lf_*(\gamma(s))ds \Big|\gamma(0)=x, \gamma(L)\in\Gamma \Big\}.\label{eq:minsol}
\end{gather}
The proof for the existence is quite routine \cite{de04}. For the convenience of the readers, we provided a detailed proof in the appendix. Note that $f^*$ and $f_*$ are integrable on every curve $\gamma\in\mathscr{C}$. Also in the appendix, it is shown that $\phi_M$ and $\phi_m$ are Lipschitz continuous with the Lipschitz constant $c_2$. These two solutions are the maximal and minimal viscosity solutions of the Eikonal equation \cite{soravia02, soravia06}. The viscosity solution is unique if $\phi_M=\phi_m$. It is clear that $\phi_M=\phi_m$ if and only if for every point $x\in U$, there's a sequence of curves $\gamma_n\in \mathscr{C}$ with total length $L_n$ (where $\gamma_n(0)=x$ and $\gamma_n(L_n)\in\Gamma$) such that
\begin{gather}\label{cond:unique}
\phi_m(x)=\lim_{n\to\infty}\int_0^{L_n}f_*(\gamma_n(s))ds \ \ \mbox{and} \ \ \lim_{n\to\infty}\int_0^{L_n}(f^*-f_*)(\gamma_n(s))ds=0 .
\end{gather}
The condition implies that $\phi_M(x)\le\phi_m(x)$; hence the two are equal. Conversely, if $\phi_M(x)=\phi_m(x)$, the condition is a straightforward  corollary of the definition.

The proof of the uniqueness in the literature can't be applied to the discontinuous cost function $f$ of Theorem \ref{thm:f}. However, we are able to  check \eqref{cond:unique} directly. The proof is provided in the appendix {\bf B}. This proof for uniqueness is new and it might be modified to prove the uniqueness of the Eikonal equation with a class of discontinuous cost functions. In the exterior problem, uniqueness of the proper viscosity solution may be proved by first considering the finite domain $\bar{U}^c\cap B_n$ with $B_n=\{|x|<n\}$, where $\phi_n$ satisfying $\phi_n(x)=0$, at $|x|=n$ using a similar proof as in the interior problem and then taking $n\to\infty$. Recalling that $f=f_*$, we finally have the desired result:\\

\begin{theorem}\label{thm:37}
The proper viscosity solution to \eqref{eq:eikonal} with the cost function obtained from \eqref{eq:extension} is unique and is given by \eqref{eq:valuefunc}. It is hence $c_2$-Lipschitz continuous and $C^1$ in $\mathbb{R}^2\setminus \bar{A}$.
\end{theorem}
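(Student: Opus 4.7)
The plan is to split \eqref{eq:eikonal} into interior and exterior problems, leveraging the continuity of $f$ on $\Gamma$. For the interior problem, the maximal and minimal viscosity solutions $\phi_M$, $\phi_m$ in \eqref{eq:maxsol}--\eqref{eq:minsol} are already in hand, and by the discussion immediately preceding the theorem, uniqueness reduces to verifying the two-part condition \eqref{cond:unique}: for each $x\in U$, producing a sequence $\gamma_n\in\mathscr{C}$ with $\int f_*\,d\gamma_n\to\phi_m(x)$ and $\int (f^*-f_*)\,d\gamma_n\to 0$.

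The key geometric input is Lemma \ref{lmm:A} combined with Theorem \ref{thm:f}: the defect $f^*-f_*$ is supported in $\bar A$, and inside any bounded region $\bar A$ is a finite union of points and locally analytic open curves. Thus any admissible curve that crosses $\bar A$ only transversally and avoids its branch points and endpoints meets $\bar A$ in a finite set, whose $1$-dimensional measure in the arc-length parameter is zero, so the contribution of $f^*-f_*$ along it vanishes. My strategy for the interior problem is therefore: given $\epsilon>0$, fix a nearly optimal curve $\gamma$ for $\phi_m(x)$ and perturb it by a small transverse displacement to a curve $\tilde\gamma$ whose intersection with $\bar A$ is finite and away from branch points. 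Because $f_*$ is bounded above by $c_2$, a perturbation supported on a set of measure $O(\epsilon)$ moves $\int f_*\,ds$ by at most $O(\epsilon)$, so that
\begin{equation*}
\phi_M(x)\le \int f^*\,d\tilde\gamma=\int f_*\,d\tilde\gamma\le \phi_m(x)+C\epsilon.
\end{equation*}
Sending $\epsilon\to 0$ gives $\phi_M=\phi_m$, and since $\phi_m\le \phi\le\phi_M$ for any viscosity solution, this both proves uniqueness and identifies the solution with the formula in \eqref{eq:valuefunc}.

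The main obstacle is to carry out this perturbation rigorously. The perturbed curve must simultaneously avoid branch points, endpoints, and tangential crossings of $\bar A$, while the change in $\int f_*\,ds$ must be controlled even though $f_*$ is only lower semi-continuous. Here Lemma \ref{lemma:proj} helps: the characteristic lines of $\varphi$ do not enter $\bar A$, so $\gamma$ may be chosen piecewise along characteristics, and by Lemma \ref{lmm:A} the locally analytic pieces of $\bar A$ are transverse to these segments except at finitely many exceptional points, each of which can be detoured around by a small rotation in a neighborhood.

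For the exterior problem, I would run the same argument on $\bar U^c\cap B_n$ with zero boundary data on $\{|x|=n\}\cup\Gamma$ to obtain unique viscosity solutions $\phi_n$ given by the analogue of \eqref{eq:valuefunc} minimizing over curves ending on $\Gamma\cup\{|x|=n\}$. By Definition \ref{def:vs2} the proper viscosity solution is the pointwise limit $\phi=\lim \phi_n$; since $c_1\le f\le c_2$, any competitor terminating on $\{|x|=n\}$ becomes suboptimal as $n\to\infty$ for fixed $x$, so the limit coincides with \eqref{eq:valuefunc}. Gluing interior and exterior via the continuity of $f$ on $\Gamma$ completes the identification. The $c_2$-Lipschitz bound then follows directly from \eqref{eq:valuefunc} by concatenating an $\epsilon$-optimal curve from $y$ with the segment $[x,y]$, which contributes at most $c_2|x-y|$. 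Finally, $C^1$ regularity on $\mathbb{R}^2\setminus \bar A$ follows because at each such point $Px$ is a singleton, $f$ is continuous near $x$, and the minimizing curve is unique and smooth, so the standard Hamilton--Jacobi characteristic argument already invoked for the signed distance function $\varphi$ applies verbatim.
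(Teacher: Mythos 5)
Your skeleton matches the paper's: split into interior/exterior, reduce uniqueness to the condition \eqref{cond:unique}, and verify it by perturbing a near-optimal curve for $\phi_m$ off the cut locus. But the core step is not carried out correctly, and the gap is exactly where the real difficulty lies. A near-optimal curve $\gamma$ for $\phi_m$ (cost $f_*$) may \emph{lie along} $\bar A$ on a set of positive, not small, arc-length measure: on an edge of $\bar A$ the value $f_*$ equals the minimum of the two one-sided limits $f_1,f_2$, so riding the cut locus can be strictly cheaper than either side seen by $f^*$, and the optimizer exploits this. Consequently your perturbation is not ``supported on a set of measure $O(\epsilon)$,'' and the bound ``$|f_*|\le c_2$ times the measure of the support'' gives an error of order $c_2\cdot m(\{s:\gamma(s)\in\bar A\})$, which need not be small. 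To control $\int f_*\,ds$ under the displacement you must (i) first establish that $f$ has one-sided limits $f_1,f_2$ that are continuous along each analytic edge (Step~1 of the paper's Appendix~B), (ii) decompose each edge into the set where $f_1=f_2$ (no defect, nothing to do), a set of small measure, and finitely many subintervals on which $f_1-f_2$ has a definite sign and the gap is bounded below, and (iii) displace the curve \emph{toward the side achieving the infimum} on each such subinterval, using uniform continuity of $f_*$ on the closed half-neighborhood of that side to show the cost changes by at most $\epsilon$ --- plus a surgery to remove the self-intersections created by the displacement so the competitor stays in $\mathscr{C}$. None of this is in your proposal, and without choosing the correct side the displacement can \emph{increase} $\int f_*\,ds$ by a fixed amount.

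A second, related misstep: you propose to take $\gamma$ ``piecewise along characteristics'' of $\varphi$ and invoke Lemma~\ref{lemma:proj}. Those are the characteristics of the signed distance function, i.e.\ optimal paths for cost $1$; they are not near-optimal for the cost $f_*$, whose minimizers are geodesics of the metric $f_*\,ds$ and, as noted above, may hug the cut locus. So this does not produce an admissible near-optimal competitor and cannot substitute for the displacement argument. The exterior limit $n\to\infty$, the $c_2$-Lipschitz bound by concatenation, and the $C^1$ statement off $\bar A$ are fine and agree with the paper.
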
\\

\subsection{The reinitialization equation}\label{sec: reinit}
Finally, we show that the reinitialization equation has viscosity solutions, and the solution that is zero on $\Gamma$ is unique and converges to the proper viscosity solution of the Eikonal equation \eqref{eq:eikonal}. Recall the reinitialization equation, written more generally as
\begin{gather}\label{eq:reinitialization}
\begin{array}{c}
\displaystyle\frac{\partial u}{\partial\tau}+\sgn(u_0)(|\nabla u|-g(x))=0,\vspace{.1in}\\
u(x,0)=u_0(x),\end{array}
\end{gather}
where $u_0\in UC(\mathbb{R}^2)$, with $UC$ the class of uniformly continuous functions. Here $u$ could be $\varphi$ or $\phi$ and $g$ could be $1$ or $f$, and the Hamiltonian is written as $H(x, p)=\sgn(u_0(x))(|p|-g(x))$. We assume that $0<c_1\le g\le c_2$ and that $\Gamma$ is the zero level set of $u_0$.

For time-dependent Hamilton-Jacobi equations, the classical solutions are not well-defined beyond the intersection of characteristics. For some applications, the multi-valued solutions are important \cite{jlot05}; for our purpose, we need the viscosity solution, whose definition is similar to the one for the Eikonal equation in \eqref{def:vs1}, with the only difference being the addition of a time derivative.

Generally, if $g$ is not continuous, we can again approximate $g$ by $g^\e$ and $g_\e$ and take the limit $\varepsilon\to0$ as we did for the Eikonal equation. Therefore, we first consider the case where $g$ is continuous. Motivated by the solutions of the Eikonal equation and the solution provided in \cite{ai02} for $g=1$, we construct the formula of the solution,
\begin{gather}\label{eq:solhj}
u(x,\tau)=\begin{cases}
\sgn(u_0)\inf_{\gamma\in\mathscr{C}}\{|u_0(\gamma(\tau))|+\int_0^\tau g(s)ds\, |\,\gamma(0)=x \}
&\tau\le\tau_x,\vspace{.1in}\\
\sgn(u_0) \inf_{\gamma\in\mathscr{C}}\{\int_0^Lg(s)ds \,|\, \gamma(0)=x, \gamma(L)\in \Gamma\}
& \tau>\tau_x,\end{cases}
\end{gather}
where $\tau_x$ is given by
\begin{multline}
\tau_x=\inf\Bigg\{\bar{\tau}\geq d(x,\Gamma) \Big| \forall \tau>\bar{\tau}:  \inf_{\gamma\in\mathscr{C}}\Big\{|u_0(\gamma(\tau))|+\int_0^\tau g(s)ds\ \ |\ \ \gamma(0)=x \Big\}\\
>\inf_{\gamma\in\mathscr{C}}\Big\{\int_0^Lg(s)ds\ \ |\ \ \gamma(0)=x, \gamma(L)\in \Gamma \Big\} \Bigg\}.
\end{multline}
It is evident that $\tau_x$ is continuous on $x$ and $\tau_x\leq c_2\,d(x, \Gamma)/c_1$. This formula is closely related to the Lax-Hopf formula and $u$ here has the interpretation of the value function with cost $g$ (see \cite{evans10}). At $\tau_x$, we must have the two expressions in \eqref{eq:solhj} being equal, for otherwise, the first is always strictly larger than the second for all $\tau\geq d(x,\Gamma)$ but this can't be true at $\tau=L_{opt}\geq d(x,\Gamma)$, where $L_{opt}=\liminf_{n\to\infty}L_n$ and $L_n$ is a sequence such that $\exists \gamma_n,\gamma_n(0)=x,\gamma_n(L_n)\in\Gamma$, $\lim_{n\to\infty}\int_0^{L_n}g(\gamma_n(s))ds
=\inf_{\gamma\in\mathscr{C}}\int_0^Lg(\gamma(s))ds$. In addition, we find that $d(x,\Gamma)\leq L_{opt}\leq \tau_x\leq c_2d(x, \Gamma)/c_1$. \\

\begin{rmk}
If one were to define a simpler time,
\begin{multline}
\tau_x=\inf\Bigg\{\tau \Big|
\inf_{\gamma\in\mathscr{C}}\Big\{ |u_0(\gamma(\tau))|+\int_0^\tau g(s)ds\ \ |\ \ \gamma(0)=x \Big\}>\\
\inf_{\gamma\in\mathscr{C}}\Big\{\int_0^Lg(s)ds\ \ |\ \ \gamma(0)=x, \gamma(L)\in \Gamma\Big\}\Bigg\},
\end{multline}
then $\tau_x$ might be smaller than $L_{opt}$. When $u(x,\tau)$ is given by the second formula, there could be no paths with lengths less than $\tau+\e_n,\e_n\to0$ to approximate the infimum. For $\tau$ bigger than so-defined $\tau_x$, the first expression might be smaller than the second one. The dynamic programming principle in the appendix then cannot be shown. 
\end{rmk}\\

The two expressions given in \eqref{eq:solhj} are continuous in both $x$ and $\tau$ and they give the same value at $\tau=\tau_x$. $u$ is then continuous in both $x$ and $\tau$. We can also see that it satisfies the initial and boundary conditions of the reinitialization equation. In the appendix, we verify that $u$ is a viscosity solution. From the formula, since $\tau_x$ is bounded by $c_2d(x,\Gamma)/c_1$, we see that the solution on any compact set converges to the proper viscosity solution of the Eikonal equation \eqref{eq:valuefunc} in finite time.

Uniqueness of the solution may be shown under the assumption that $u(x\in\Gamma,\tau)=0$, which can be ensured numerically. Following \cite{ai02}, consider
\begin{gather}\label{prob:in}
\displaystyle\frac{\partial u}{\partial\tau}+(|\nabla u|-g)=0 \ \ \ x\in U,\vspace{.05in}\\ 
u(x,0)=u_0(x) \ \ \  x\in\bar{U}\nonumber \vspace{.05in}\\
u(x\in\Gamma, \tau)=0,\nonumber
\end{gather}
and
\begin{gather}\label{prob:out}
\displaystyle\frac{\partial u}{\partial\tau}-(|\nabla u|-g)=0 \ \ \  x\in \mathbb{R}^2\setminus\bar{U},\vspace{.05in}\\ 
u(x,0)=u_0(x) \ \ \  x\in\mathbb{R}^2\setminus U\nonumber\vspace{.05in}\\
u(x\in\Gamma, \tau)=0.\nonumber
\end{gather}
The uniqueness of the solutions for these two problems have been established if $g$ is continuous and bounded below by a positive number \cite{Ishii86}.  This is enough to say that there is at most one viscosity solution satisfying $u(x\in\Gamma, \tau)=0$. One common mistake in the literature is to assume that, since $\sgn(u_0(x\in\Gamma))=0$, that $u_{\tau}(x\in\Gamma)=0$ by \eqref{eq:reinitialization}. However, this argument is inadequate in the viscosity sense, since the set of viscosity solutions are unchanged by a redefinition of the sign function to a value $\sgn(0)\in[-1, 1]$. It is sensible, however, that all viscosity solutions should have $u(x\in\Gamma,\tau)=0$ as the characteristics flow out of $\Gamma$, and fortunately we can develop numerical schemes to ensure $u(x\in\Gamma,\tau)=0$. 

Finally, for $g$ equal to the cost function $f$ obtained from \eqref{eq:extension}, it may be discontinuous as previously discussed. As was done for the Eikonal equation, approximating $f$ with $f^{\varepsilon}$ and $f_{\varepsilon}$, and taking $\varepsilon\to 0$, we have the maximal and minimal solutions with $g$ replaced by $f^*$ and $f_*$. And as for the Eikonal equation, these two are equal and the solution that is zero on the interface $\Gamma$ is unique: \\

\begin{theorem}
Assume in \eqref{eq:reinitialization} that either $g=1$ or $g=f$ (the cost function obtained using \eqref{eq:extension}). The viscosity solution that satisfies $u(x\in\Gamma)=0$ is unique and is provided in \eqref{eq:solhj}. This solution converges to the proper viscosity solution of \eqref{eq:eikonal}.
\end{theorem}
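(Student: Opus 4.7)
The plan is to handle two cases in parallel, $g=1$ and $g=f$, the former giving traditional reinitialization and the latter the generalized one with possibly discontinuous cost. The argument has three components: (i) \eqref{eq:solhj} defines a viscosity solution with the correct Dirichlet data on $\Gamma$; (ii) uniqueness among viscosity solutions vanishing on $\Gamma$; and (iii) finite-time convergence on compacta to the proper viscosity solution of the Eikonal equation \eqref{eq:eikonal} given by \eqref{eq:valuefunc}.

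For continuous cost (including $g=1$ and the approximants introduced below), I would first verify that $u$ defined by \eqref{eq:solhj} is a viscosity solution of \eqref{eq:reinitialization} by establishing a dynamic programming principle and deferring the details to the appendix: splitting the optimization at an intermediate time $\tau_0\in(0,\tau)$ gives the Bellman relation, and standard manipulations at local extrema of $u-\zeta$ translate it into the sub/super-solution inequalities. Continuity of $u$ across $\tau=\tau_x$ is forced by the very definition of $\tau_x$, which makes the two branches agree there; the boundary value $u(x\in\Gamma,\tau)=0$ and initial value $u(x,0)=u_0(x)$ are immediate from the infimum representation (degenerate paths of length $0$). For uniqueness among viscosity solutions enforcing $u(x\in\Gamma,\tau)=0$, I would apply Ishii's comparison principle \cite{Ishii86} separately on the interior problem \eqref{prob:in} and exterior problem \eqref{prob:out} and paste the two along $\Gamma$ via the shared boundary value $0$.

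The discontinuous case $g=f$ is reduced to the continuous one by approximating $f$ by the sup- and inf-convolutions $f^{\varepsilon}$ and $f_{\varepsilon}$, which are Lipschitz, lie in $[c_1,c_2]$ for small $\varepsilon$, and satisfy $f^{\varepsilon}\downarrow f^{*}$ and $f_{\varepsilon}\uparrow f_{*}$ pointwise. Each approximate problem has a unique viscosity solution by the previous step, and passing $\varepsilon\to 0$ via the standard stability of viscosity solutions under locally uniform convergence of the data yields the maximal and minimal solutions $\phi_M$ and $\phi_m$, given by \eqref{eq:solhj} with $g$ replaced by $f^{*}$ and $f_{*}$ respectively, both vanishing on $\Gamma$. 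The main obstacle is showing $\phi_M=\phi_m$: the difference is controlled by integrals of $(f^{*}-f_{*})$ along near-optimal paths. For $\tau>\tau_x$ this reduces verbatim to condition \eqref{cond:unique} already verified for the Eikonal equation in Theorem \ref{thm:37}; for $\tau\le\tau_x$ I would re-run the same path-perturbation argument, exploiting that $\bar{A}$ is a locally finite union of analytic arcs and isolated points (Lemma \ref{lmm:A}) and that $f$ admits one-sided limits across each arc (Theorem \ref{thm:f}), so any candidate path may be perturbed off $\bar{A}$ at arbitrarily small additional cost.

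For convergence to the Eikonal solution, I would observe that $\tau_x\le (c_2/c_1)\,d(x,\Gamma)$, so on any compact $K\subset\mathbb{R}^2$ the quantity $\sup_{x\in K}\tau_x$ is finite. For $\tau$ larger than this bound, \eqref{eq:solhj} collapses uniformly on $K$ to its second branch, which is exactly the formula \eqref{eq:valuefunc}; by Theorem \ref{thm:37} this is the unique proper viscosity solution of \eqref{eq:eikonal}. Thus convergence holds in finite time on compacta, strictly stronger than an asymptotic statement, and the theorem is complete once the max-equals-min step is executed as above.
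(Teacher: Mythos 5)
Your proposal is correct and follows essentially the same route as the paper: verifying \eqref{eq:solhj} via a dynamic programming principle, obtaining uniqueness by splitting into the interior/exterior problems \eqref{prob:in}--\eqref{prob:out} and invoking Ishii's comparison result, handling $g=f$ by sup/inf-convolution and the max-equals-min argument inherited from the Eikonal case, and getting finite-time convergence on compacta from $\tau_x\le c_2\,d(x,\Gamma)/c_1$. The only difference is that you explicitly flag the need to re-run the path-perturbation argument on the $\tau\le\tau_x$ branch, a detail the paper passes over with "as for the Eikonal equation, these two are equal," so your version is, if anything, slightly more careful at that step.
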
\\

\section{Numerical schemes}\label{sec:numerics}
We have shown in theory that the LGPR method yields the desired level set function. We now proceed to describe numerical schemes for solving the PDEs introduced in \S\ref{sec:setup}, with a few modifications from classical methods \cite{pmozk99,min10}. We also show how subcell resolution may be used to extend the interface gradient away from the surface with high accuracy. First we present a numerical scheme for solving the transport equation which involves a second-order accurate upwind Essentially Non-Oscillatory (ENO) scheme with subcell resolution in space and Gauss-Seidel iteration in time, and then we describe a method for solving the reinitialization equation which involves a Godunov numerical Hamiltonian scheme in space and again Gauss-Seidel iteration in time.

\subsection{Numerical setup}

Consider an Eulerian grid with uniform grid size $h$ upon which the interface $\Gamma$ is overlaid. Gridpoints $(x_i, y_j)$ are defined by $x_i=ih$ and $x_j=jh$, with $i$ and $j$ taking integer values. (Unlike in the theoretical part of the paper above, in which $x$ and $y$ correspond to two points in $\mathbb{R}^2$, in the remainder of the paper $x$ and $y$ are coordinates, $(x,y)\in \mathbb{R}^2$). 


In order to approximate derivatives of possibly non-smooth functions we will rely on ENO finite differences (see \cite{os91}). In addition, in the solution of Hamilton-Jacobi equations, one-sided (upwind) derivatives are commonly used to retain causality (i.e. information follows the characteristics). In this paper, the following one-sided second order ENO finite differences will be used to approximate first derivatives,
\begin{gather}
\begin{array}{c}
D_x^-\phi_{i,j}=\displaystyle\frac{\phi_{i,j}-\phi_{i-1,j}}{h}+\frac{h}{2}\minmod(D_{xx}\phi_{i,j}, D_{xx}\phi_{i-1,j}),\vspace{.1in}\\
D_x^+\phi_{i,j}=\displaystyle\frac{\phi_{i+1,j}-\phi_{i,j}}{h}-\frac{h}{2}\minmod(D_{xx}\phi_{i,j}, D_{xx}\phi_{i+1,j}),\vspace{.1in}\\
\minmod\{a,b\}=\left\{
\begin{array}{cc}
0& \text{if}\ ab<0,\\
\sgn(a)\min\{|a|, |b|\}& else,
\end{array}\right.
\end{array}
\end{gather}
where the second derivative is given by the centered difference formula
\begin{gather}
D_{xx}\phi_{i,j}=\frac{1}{h^2}\left(\phi_{i+1,j}-2\phi_{i,j}+\phi_{i-1,j}\right).
\end{gather} 

\subsection{The transport equation}


Recall the definitions of the stretch function $\chi(x)=|\nabla\phi|(x)$ on $x\in \Gamma$ and the inward pointing unit normal vector $\hat{n}=\nabla\phi/|\nabla\phi|$. In solving the transport equation, we aim to accurately preserve the stretch function as well as its tangential derivative along the interface,
\begin{gather}
\hat{n}\cdot\nabla\nabla\phi\cdot(I-\hat{n}\hat{n})|_{\Gamma}=D_s\chi(\Gamma(s))\s,
\end{gather}
where $s$ is the arc-length and $\s(s)=\Gamma'(s)$ is the unit vector tangent to the surface $\Gamma$. Our strategy will be to preserve the zero level set of $\phi$ (the location of the surface $\Gamma$) and the stretch function $\chi$ with at least second order accuracy, and thus $D_s\chi(\Gamma(s))\s$ is formally preserved with first order accuracy. 

In the solution of the transport equation we will use a subcell resolution (SR) technique to obtain the cost function $f$ (see Eq.~\eqref{eq:eikonal}). In SR, the interface is determined by interpolating the obtained signed distance function $\varphi$, computing the the gradient there, and modifying the one-sided ENO derivatives according to the interface \cite{harten89, min10}. To illustrate the subcell resolution technique, consider as an example the case $\varphi_{ij}\varphi_{i-1,j}\leq0$. Letting 
\begin{gather}
a_{ij}=h^2 \minmod(D_{xx}\varphi_{ij}, D_{xx}\varphi_{i-1,j}),
\end{gather}
and assuming $(x_{\Gamma}, y_j)\in\Gamma$, $x_{\Gamma}$ is found by quadratic interpolation at $\varphi_{i-1,j},\varphi_{ij}$ using the second order derivative $a_{ij}/h^2$. The approximations of the first derivatives are then given by
\begin{gather}\label{formula:gradinterface}
\begin{array}{c}
\partial_x\phi_0(x_{\Gamma}, y_j)\approx \displaystyle\frac{\delta^{x-}_{ij}}{h}D_x^0\phi_{0,i-1,j}+\frac{h-\delta^{x-}_{ij}}{h}D_x^0\phi_{0,ij},\vspace{.1in}\\
\partial_y\phi_0(x_{\Gamma}, y_j)\approx \displaystyle\frac{\delta^{x-}_{ij}}{h}D_y^0\phi_{0,i-1,j}
+\frac{h-\delta^{x-}_{ij}}{h}D_y^0\phi_{0,ij},
\end{array}
\end{gather}
where $D_x^0, D_y^0$ are the centered differences and 
\begin{gather}\label{formula:interface}
\delta^{x-}_{ij}=x_i-x_{\Gamma}=h\left(\frac{1}{2}+\frac{(\varphi_{ij}+\varphi_{i-1,j})-a_{ij}/4}
{(\varphi_{ij}-\varphi_{i-1,j})+\sgn(\varphi_{ij}-\varphi_{i-1,j})\sqrt{D_{ij}}}\right), 
\end{gather}
where
\begin{gather}
D_{ij}=(a_{ij}/2-\varphi_{ij}-\varphi_{i-1,j})^2-4\varphi_{ij}\varphi_{i-1,j}.
\end{gather}


Having now obtained the cost function on the interface, $f(x_{\Gamma}, y_j)=\sqrt{\partial_x\phi_0^2+\partial_y\phi_0^2}$, the left ENO derivative of $f$ at $(i, j)$ is modified as 
\begin{gather*}
D_x^-f_{ij}=
\frac{f_{ij}-f(x_{\Gamma}, y_j)}{\delta^{x-}_{ij}}+\frac{\delta^{x-}_{ij}}{2}\minmod(D_{xx}f_{i-1,j}, D_{xx}f_{ij}),
\end{gather*} 
while $D_x^+$ at $(i-1, j)$ is similarly modified. 

Next, the cost function $f$ is extended into space by solving the transport equation,
\begin{gather}
\frac{\partial f}{\partial \tau}+\nabla d\cdot\nabla f=0,
\end{gather} 
where $\nabla d=\sgn(\varphi)\nabla\varphi$. We will denote $\sgn(\varphi_{ij})D_x^{\pm}\varphi_{ij}$ by $D_x^{\pm}d_{ij}$ (where $D_x^{\pm}$ are acting on $\varphi$ not $|\varphi|$ and $D_x^{\pm}$ are modified with subcell resolution near the interface) and we define
\begin{gather}
D_xd_{ij}=\text{maxabs}\{\max\{D_x^-d_{ij}, 0\},\ \min\{0, D_x^+d_{ij}\}\},
\end{gather}
where $\text{maxabs}\{a,b\}=(a-b)\mathbbm{1}_{\{|a|\ge |b|\}}+b$. For the numerical gradient we then take
\begin{gather}
\eta_{ij}^x=\displaystyle\frac{D_xd_{ij}}{\sqrt{(D_xd_{ij})^2+(D_yd_{ij})^2+\e^2}},\ \ \  \eta_{ij}^y=\displaystyle\frac{D_yd_{ij}}{\sqrt{(D_xd_{ij})^2+(D_yd_{ij})^2+\e^2}},
\end{gather}
where $\e$ is a small parameter (chosen here to be $10^{-7}$) to avoid the case that both $D_x$ and $D_y$ are close to zero at potentially irregular points. $D_xd_{ij}$ is so chosen to ensure that the information propagating to $(i,j)$ is coming points closer to the interface, which follows the correct characteristic directions, and also that any oscillation in $f$ on the the cut locus $\bar{A}$ (see Def.~\ref{def:locus}), is suppressed. Using the definitions above, a complete spatial discretization for the transport equation is given by
\begin{gather}
L_T(f_{ij})=-\left((\eta_{ij}^x)^+D_x^-f_{ij}-(\eta_{ij}^x)^-D_x^+f_{ij}+(\eta_{ij}^y)^+D_y^-f_{ij}-(\eta_{ij}^y)^-D_y^+f_{ij}\right),
\end{gather}
where $a^+=\max\{a, 0\}$ and $a^-=-\min\{a, 0\}$. 

We now turn to the discretization of time, $\tau$, which is not real time but merely a parameter used to relax the system to its steady state. Different timesteps $k_{ij}$ are chosen for different points to ensure stability. Since $\delta_{ij}^{x\pm}$ or $\delta_{ij}^{y\pm}$ can be small, a Courant-Friedrichs-Lewy (CFL) condition for convergence requires that the time step be small near the interface. We use the same convention as in \cite{min10},
\begin{gather}\label{eq:kij}
k_{ij}=C\min\{h, \delta_{ij}^{x\pm},  \delta_{ij}^{y\pm}\},
\end{gather}
where $C$ is a constant taken small enough to ensure convergence. The largest possible value of $C$ depends on the cost function, and $C<1$ is sufficient when $f=1$. We choose $C=1/2$ for the numerical examples to come, which is sufficient for the cases studied.  Further, a Gauss-Seidel iteration is employed (values are updated using the newest data along the chosen sweeping directions) to allow information to propagate long distances in some directions with one iteration. The Gauss-Seidel iteration is given by
\begin{gather}
f_{ij}\leftarrow f_{ij}+k_{ij}L_T(f_{ij})
\end{gather}
along the four sweeping directions $(i,j)=(1:N,1:N)$, $(i,j)=(1:N, N:-1:1)$, $(i,j)=(N:-1:1,1:N)$, and $(i,j)=(N:-1:1, N:-1:1)$, repeatedly.


\begin{rmk}
For the application of a local level set method, it may be preferable to use a direct time-stepping method. One possibility is the second-order TVD Runge-Kutta method,
\begin{gather}\label{tvdrk}
\begin{array}{c}
\tilde{f}_{ij}^{n+1}=f_{ij}^n+k_{ij}L_T(f_{ij}^n),\vspace{.1in}\\
f_{ij}^{n+1}=\displaystyle\frac{1}{2}\left(f_{ij}^n+\tilde{f}_{ij}^{n+1}\right)+\frac{k_{ij}}{2}L_T(\tilde{f}_{ij}^{n+1})
\end{array}
\end{gather}
\end{rmk}

\begin{rmk}
For the transport equation \eqref{eq:extension}, in many references $\nabla \varphi$ is discretized by centered difference. Near the interface, the signed distance function is $C^1$ so that this simple treatment is convenient and sufficient. However, once $f$ is extended into the domain where the signed distance function is not smooth, the scheme developed in this section is expected to be more accurate.
\end{rmk}

\subsection{The reinitialization equation}
We now introduce the Godunov Hamiltonian scheme used to solve the reinitialization equation. By a similar consideration of causality, we use the Godunov Hamiltonian $\hat{H}_{ij}(D_x^-u_{ij},D_x^+u_{ij},D_y^-u_{ij},D_y^+u_{ij})$, where $s_{ij}=\sgn(u_0(x_i,y_j))$, and
\begin{gather}
\hat{H}_{ij}(a, b,c , d)
=\left\{\begin{array}{cc}
s_{ij}\left(\sqrt{\max\{a^+, b^-\}^2+\max\{c^+, d^-\}^2}-g_{ij}\right)& \text{if}\ s_{ij}\geq 0,\vspace{.1in}\\
s_{ij}\left(\sqrt{\max\{a^-, b^+\}^2+\max\{c^-, d^+\}^2}-g_{ij}\right)& \text{if}\ s_{ij}< 0,
\end{array}\right.
\end{gather}
for spatial discretization \cite{pmozk99, min10}. (Recall $u$ may be $\varphi$ or $\phi$ while $u_0=\phi_0$.)

It would seem to be the case that the subcell resolution (SR) technique is not required in order to achieve second-order accuracy with the Godunov Hamiltonian scheme. Without SR, the absolute error $||\nabla u|-g|$ does indeed scale as $O(h^2)$ (recall that $g$ can be either $1$ or $f$, see \S\ref{sec: reinit}), but the interface location is generally not determined at the same level of accuracy, especially when the interface gradients of $u$ and $u_0$ are different. This is particularly important for \eqref{eq:dis} because the information comes from the zero set of $\varphi$ in \eqref{eq:extension}. The interface gradient would be determined only with first order accuracy and the variation in the stretch function $D_s\chi(\Gamma(s))\s$ is then poorly captured. Thus, subcell resolution is still required to achieve second order accuracy for the Godunov Hamiltonian scheme. For example, when $u_{0,ij}u_{0,i-1,j}\le0$, we modify $D_x^-$ for $(x_i,y_j)$ to
\begin{gather*}
D_x^-u_{ij}=
\frac{u_{ij}}{\delta^{x-}_{ij}}+\frac{\delta^{x-}_{ij}}{2}\minmod(D_{xx}u_{i-1,j}, D_{xx}u_{ij}).
\end{gather*} 
and $D_x^+$ for $(x_{i-1}, y_j)$ is similarly modified. For time discretization we again use Gauss-Seidel iteration using the same spatially varying timestep as in \eqref{eq:kij}. 

The numerical Hamiltonian ensures that information propagating to $(x_i, y_j)$ comes from values of $u$ closer to zero. The direction of the characteristics is preserved and $u(x\in\Gamma,\tau)=0$ is ensured. The scheme with this numerical Hamiltonian is monotone. 
While not identical to the case of present interest, that monotone schemes of the form $u_t+H(\nabla u)=0$, where $H$ is continuous, have been shown to converge to the viscosity solution \cite{cl84} is suggestive. In practice we do observe the expected convergence. 

\section{Application to Eulerian Immersed boundary method}\label{sec:app}
As we have mentioned above, our method allows the application of local level set method to Eulerian immersed boundary method.
We'll basically use the local level set method proposed in \cite{pmozk99}.  The skeleton of the algorithm is given as following:

0. Given the immersed interface $\Gamma(\xi)$, we construct the signed distance function $\varphi(x)$. Pick three positive numbers $\alpha<\beta<\gamma$.

1. Let $T=\{\b{x}: |\varphi(x)|<\gamma\}$. In the initialization step, we set $\chi(x\in\Gamma)=|\b{X}_{\xi}|$. Then, use LGPR to obtain $\phi$ in $T$. In the reinitialization step, how LGPR is used to get a new level set function has been explained in detail in the above sections.

2. Solve the fluid equations (We use Navier-Stokes equations in the computation example later) where the fluid body force is computed by \eqref{eq:F_elastic}. Evolve $\phi$ inside $T$ with $c(\phi)\b{u}$ used to convect the level set functions. We use the cutoff function introduced in \cite{pmozk99}
$$
c(y)=\left\{\begin{array}{cc}
1&\text{if}\ |y|\le \beta\\
(|y|-\gamma)^2(2|y|+\gamma-3\beta)/(\gamma-\beta)^3&\text{if}\ \beta<|y|\le\gamma\\
0&\text{if}\ |y|>\gamma
\end{array}\right..
$$

4. When the interface stays far enough from the boundary of the computation tube or $|\nabla\phi|$ at the interfaces stays in $[1-c, 1+c]$ for a given $c$, go to step 3; Else let $T_0$ be the $(\beta-\alpha)$ neighborhood of $T$. We reinitialize the level set function to the signed distance function in $T_0$ and move to Step $1$.

For this algorithm, a lot of practical issues have been omitted since they are standard or straightforward but tedious to explain. However, we would like to point out how to use LGPR to initialize the level set function $\phi$. The method is as following:

In a thin tube containing the interface, for every grid point $(x_i,y_j)$, supposing $z=P(x_i,y_j)=X(\xi_z)$ is the projection, we determine the distance by computing $d=|(x_i,y_j)-z|$ and the sign of of the signed distance function $\varphi$ by checking $((x_i,y_j)-z)\times X_{\xi}(\xi_z)$. We record $|X_{\xi}(\xi_z)|$ at the point $(x_i, y_j)$, which is the value of $|\nabla\phi_{0,ij}|$. The distance function $d(x)$ is obtained by the Fast Sweeping Method (\cite{zhao04}) and the sign is extended in a similar manner. $\varphi$ is then determined, and we evolve \eqref{eq:dis} to improve $\varphi$. The values of the cost function $f$ at the interface are interpolated from $|\nabla\phi_{0,ij}|$. Our method may then be applied to recover $\phi$.
 
\section{Numerical experiments}\label{sec:exp}
In this section we test the LPGR method with a few illustrative examples. First we show that the method achieves the expected accuracy in a setting in which the initial level set function and interface stretching are specified, and we show that the elastic force is preserved through the reinitialization process. Next, we simulate the behavior of a stretched membrane in a fluid using a local level set method, and show that reinitialization is necessary and effective when the membrane encroaches the boundary of the computational tube. Finally, we show how the method could also be used to initialize level set functions from a given parametric curve. 

\subsection{Example 1}
\begin{figure}[htbp]
\begin{center}
\includegraphics[width=0.9\textwidth]{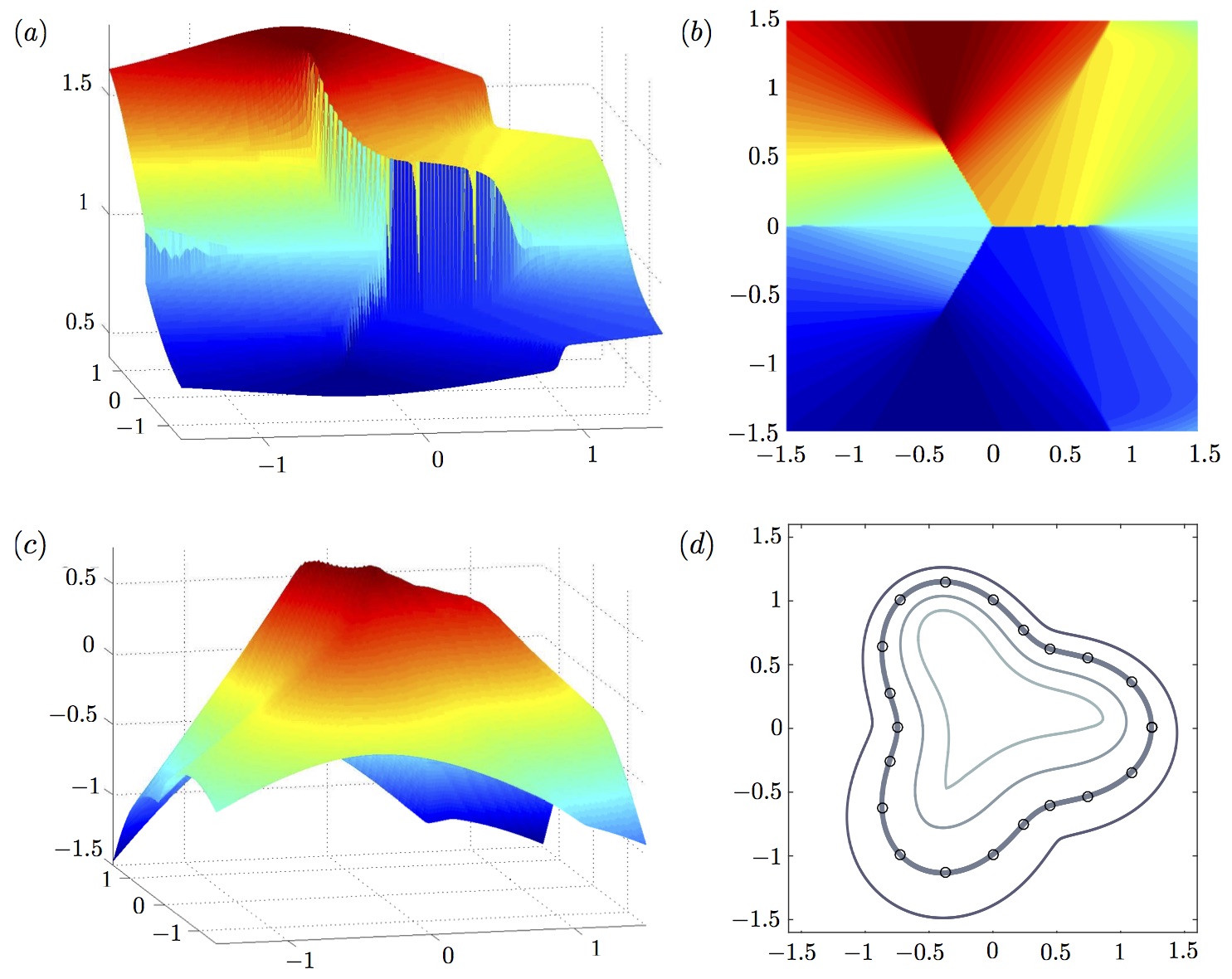}
\caption{Example 1. (a) The discontinuous cost function $f$ constructed from the specified stretch $\chi=\exp(0.5y)$ on the interface $r(\theta)=1+0.25\cos(3\theta)$. (b) The same as (a) from above. The cut locus (where $f$ is discontinuous) is more readily apparent. (c) The level set function $\phi$ after reinitialization. (d) Contour plots of the level set function after reinitialization. The bold line is the zero set corresponding to $\Gamma$ while symbols are sample positions on the interface computed analytically.}
\label{fig:discontinuous_ex1}
\end{center}
\end{figure}
We first present a typical example in which the cost function is discontinuous in order to show that the transport equation is successfully solved and that the interface gradient is preserved with the desired accuracy. Let the surface $\Gamma$ be parameterized in polar coordinates by $r(\theta)=1+0.25\cos(3\theta)$. We take as the initial level set function $\phi_0(x,y)=\varphi(x,y)\exp(0.5y)$ where $\varphi$ is the signed distance function relative to $\Gamma$. Since the interface is non-convex, characteristics of $\varphi$ from the interface intersect both inside and outside of $\Gamma$. 

Using the computational domain $[-1.5,1.5]^2$ and grid size $h=3/256$, we show in Figs.~\ref{fig:discontinuous_ex1}(a,b) two views of the cost function $f$ produced by solving the transport equation. The set where $f$ is discontinuous (the cut locus, where characteristics intersect) is well captured. The cost function does not oscillate near the cut locus using our scheme for solving the transport equation. Figs.~\ref{fig:discontinuous_ex1}(c,d) show the numerical solution of the reinitialized level set function $\phi$ and its contours, which converges rapidly even though $f$ is discontinuous. 

\begin{figure}[htbp]
\begin{center}
\includegraphics[width=\textwidth]{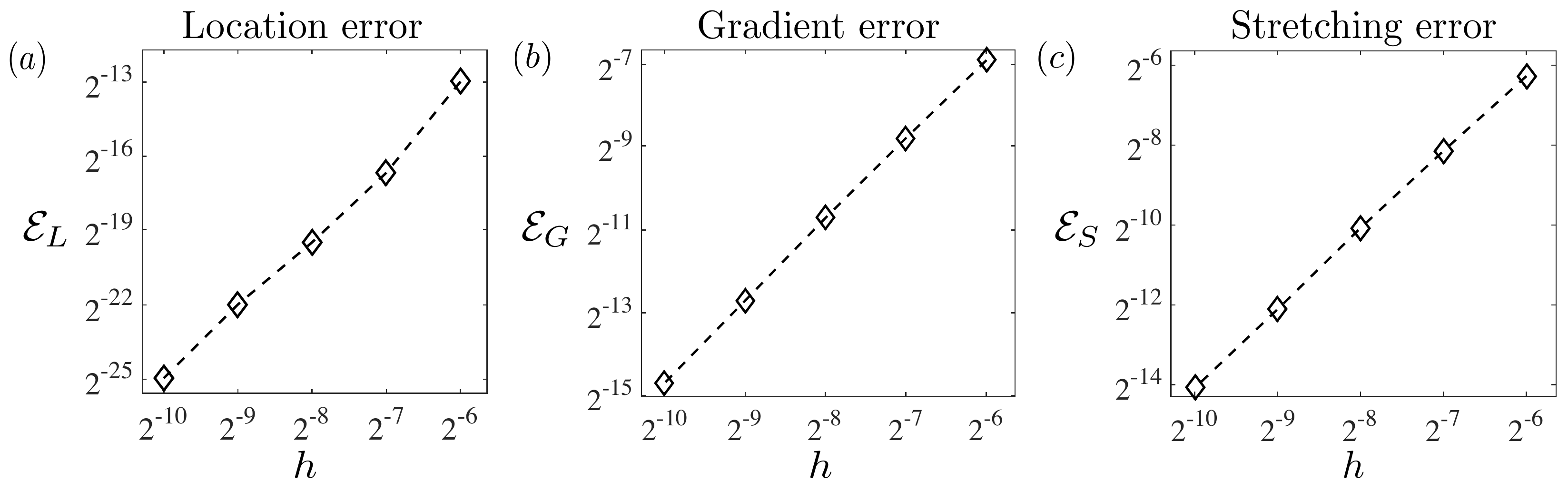}
\caption{Convergence results for Example 1. (a) The error in the interface location decreases roughly as $O(h^3)$. (b) The error in the interface gradient decays as $O(h^2)$. (c) The error in the interface stretching computed using the reinitialized level set function decays as $O(h^2)$.}
\label{fig:stretcherr}
\end{center}
\end{figure}

To test the accuracy of the numerical method, we find all the interface points $p^0=(x_{\Gamma}^0, y_j)$ or $p^0=(x_i, y^0_{\Gamma})$ by the interpolation formula \eqref{formula:interface} using the data $\phi_0$ and correspondingly points $p=(x_{\Gamma}, y_j)$ or $p=(x_i, y_{\Gamma})$ using $\phi$. The gradients $\nabla\phi_0$ and $\nabla\phi$ are computed using \eqref{formula:gradinterface} except that $\delta_{ij}^{x\pm}$ and $\delta_{ij}^{y\pm}$ are computed using the level set functions themselves instead of $\varphi$. We define the interface location error by $\mathcal{E}_L=\max\{|x_{\Gamma}^0-x_{\Gamma}|, |y_{\Gamma}^0-y_{\Gamma}|\}$, error in the interface gradient by $\mathcal{E}_{G}=\max\{|\nabla\phi_0(p^0)-\nabla\phi(p)|\}$ and the stretching error by $\mathcal{E}_{S}=\max\{||\nabla\phi(p)|-\exp(0.5y_p)|\}$. Fig.~\ref{fig:stretcherr}(a-c) show the decay rate of these errors with the spatial step size $h$. The error in the computed interface position decays roughly as $O(h^{3})$, while $\nabla\phi$ (both the direction and norm) is accurate to $O(h^{2})$ as expected. Elastic forces computed using the reinitialized level set function are therefore expected to carry over with first order accuracy, which we now probe. 

\begin{figure}[htbp]
\begin{center}
\includegraphics[width=0.9\textwidth]{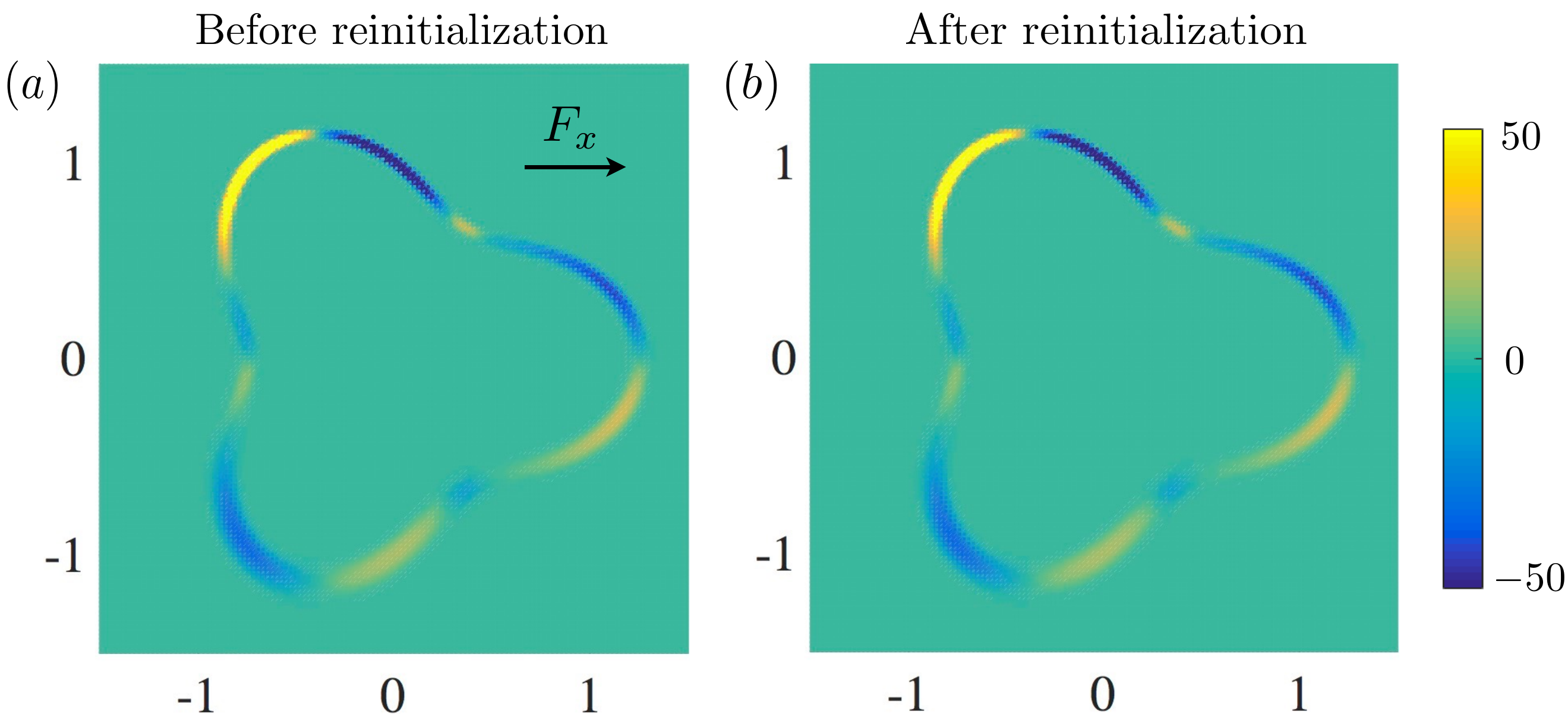}
\caption{The $x$-component of the elastic force $F_x$ in Example 1 due to the complex initial stretching, using grid size $h=3/128$, before reinitialization (a) and after reinitialization (b). The force computed has different relations with the curvature for $y>0$ and $y<0$. The traditional reinitialization method would naturally lose information about interface stretching, but here we see that the elastic force is preserved through the reinitialization process.}
\label{fig:forcechange}
\end{center}
\end{figure}

\begin{figure}[htbp]
\begin{center}
\includegraphics[width=0.9\textwidth]{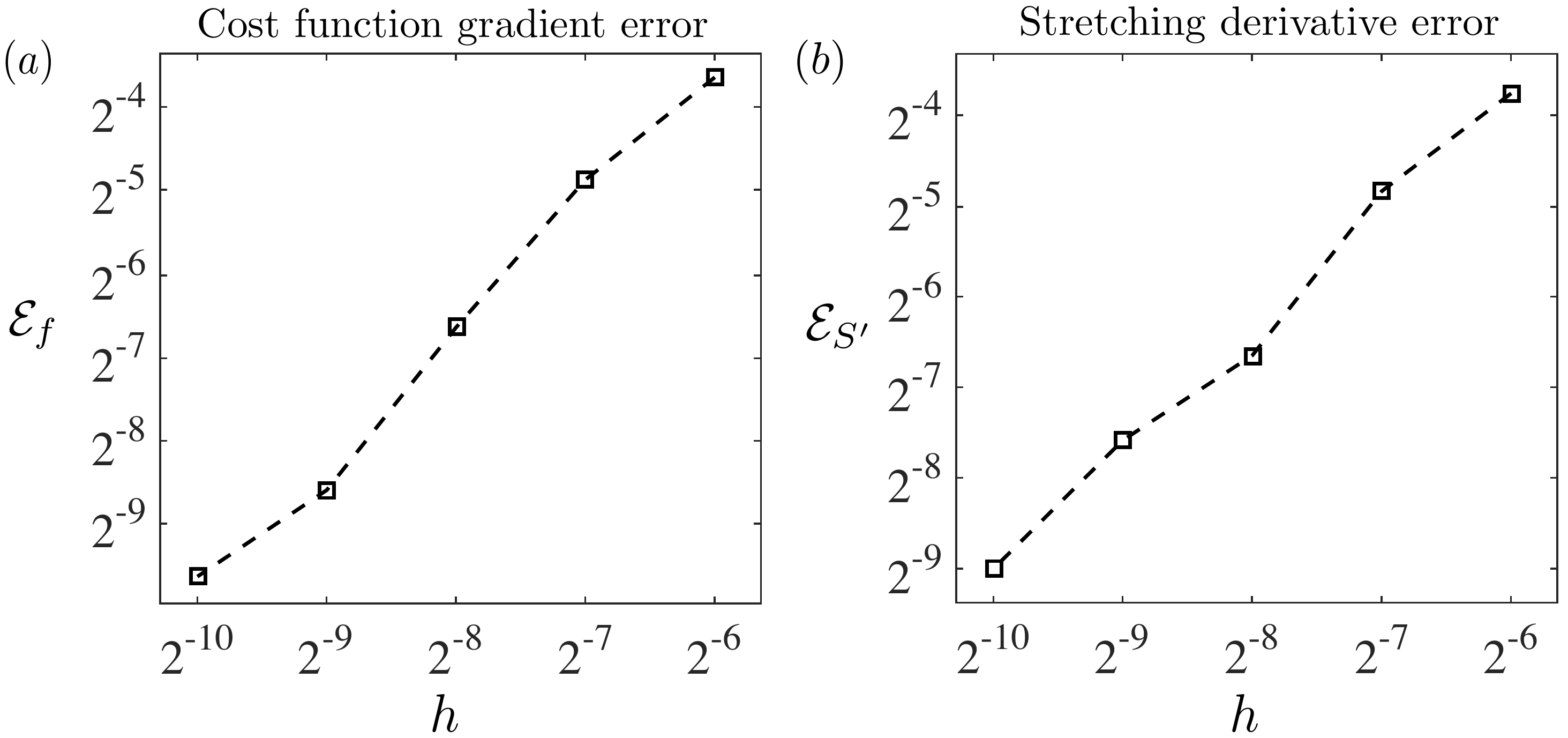}
\caption{(a) The error in the computed cost function gradient. (b) The error in the stretching derivative (the tension gradient in the case of Hookean elastic energy). Both are first order accurate as expected.}
\label{fig:elasticforce}
\end{center}
\end{figure}

Figures~\ref{fig:forcechange}(a-b) show the $x$-component of the force associated with the prescribed stretching in the first example, before reinitialization using $\phi_0$ (a) and after reinitialization using $\phi$ (b). For $y>0$, $|\nabla\phi|>1$ (the interface is in tension) while for $y<0$, $|\nabla\phi|<1$ (the interface is in compression). The force is computed on $[-1.5,1.5]^2$ with $h=3/128$, using a Hookean elastic energy $E(\chi)=(K/2)(\chi-1)^2$ (with $K=5$). Following Peskin \cite{peskin02}, in the description of the force \eqref{eq:F_elastic} that might be used in an immersed boundary context we use a smoothed $\delta$ function, $\delta_h=\mathbbm{1}_{\{|\phi|\le 3h\}}(1+\cos(\pi \phi/(3h)))/(6h)$. The traditional reinitialization method would naturally lose information about interface stretching, but here we see that the elastic force is preserved through the reinitialization process.

We now test the accuracy in computing both the tangent gradient of the cost function and the stretching derivative (the tension gradient in the case of Hookean elastic energy). We define $Tf(p^0)=\nabla f\cdot(I-\hat{n}\hat{n})(p^0)$ where $\hat{n}(p^0)=\nabla\phi^0/|\nabla\phi^0|$ and correspondingly $T\chi(p^0)=\nabla\exp(0.5 y)\cdot (I-\hat{n}\hat{n})(p^0)$. The cost function gradient error $\mathcal{E}_f=\max\{|Tf(p^0)-T\chi(p^0)|\}$ and the stretching derivative error $\mathcal{E}_{S'}=\max\{|\hat{n}\cdot\nabla\nabla\phi\cdot(I-\hat{n}\hat{n})(p)-\hat{n}\cdot\nabla\nabla\phi^0\cdot(I-\hat{n}\hat{n})(p^0)|\}$, where $\hat{n}(p)=\nabla\phi/|\nabla\phi|$ are then defined. The gradients ($\nabla f$, $\nabla\phi$ etc) are computed using \eqref{formula:gradinterface}, and $\nabla\nabla\phi(p)$ is computed by interpolating the corresponding Hessians at the two nearby points. As shown in Fig.~\ref{fig:elasticforce}, the quantities are computed with roughly first order accuracy as expected.

\subsection{Example 2}\label{sec:ex2}
\begin{figure}[htbp]
\begin{center}
\includegraphics[width=0.9\textwidth]{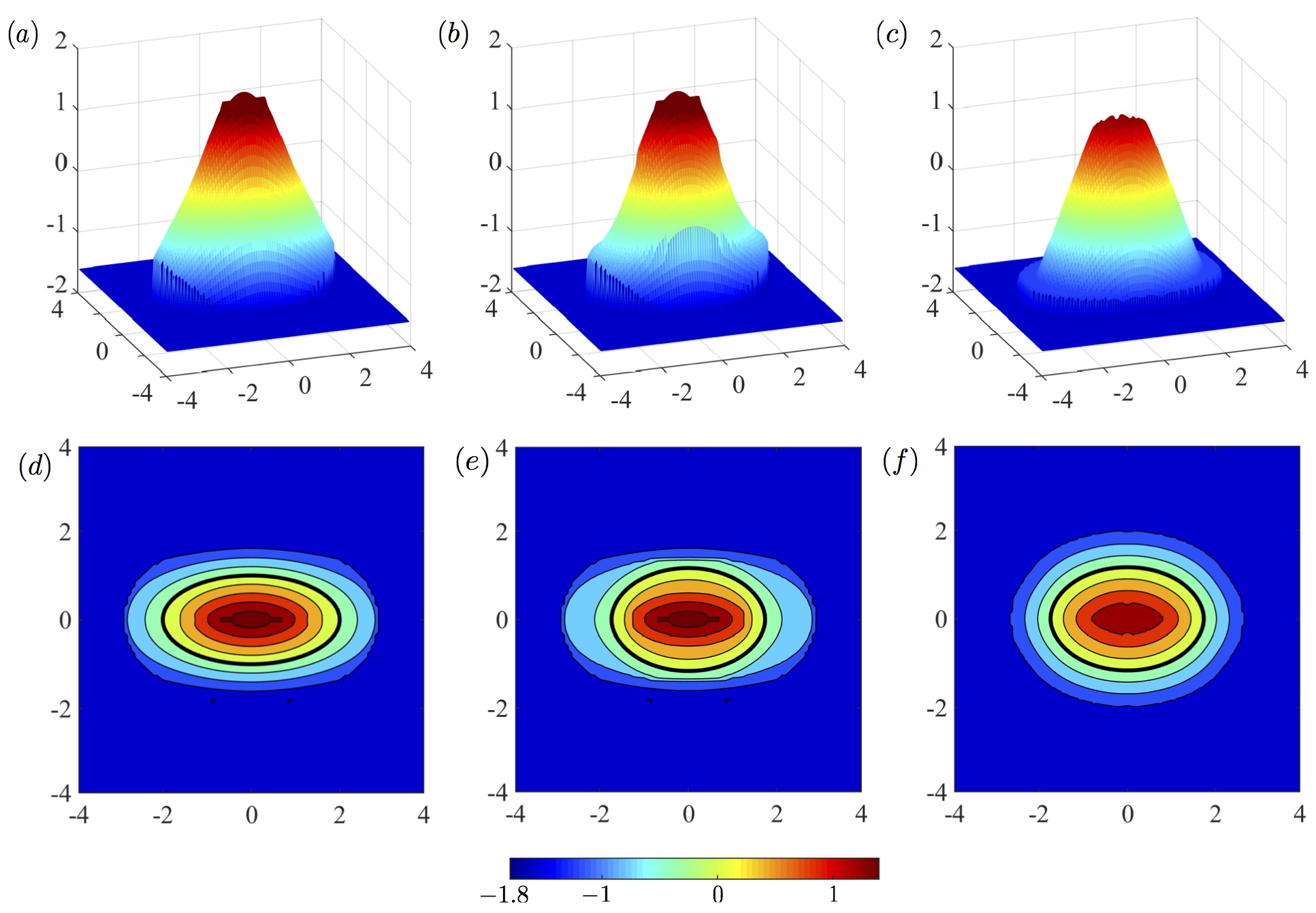}
\caption{Example 2. An initially stretched elastic membrane relaxes in a Navier-Stokes fluid. The fluid-structure interaction is computed in a small region surrounding the membrane (a local level set method). (a,d) The initial level set function, and its contours, with the interface shows as a dark black line. (b,e) The level set function and contours after the membrane has partially relaxed; the membrane encroaches the boundary of the computational tube containing the interface, leading to the development of errors. (c,f) The level set function after reinitialization; the interface is again well separated from the computational boundary, and the stretching information has been preserved.}
\label{fig:phisimu}
\end{center}
\end{figure}
As a second example,  we look at the application of LGPR in the Eulerian immersed boundary method by simulating the dynamics of a relaxing membrane in a fluid, where reinitialization becomes necessary since a local level set method is used: the fluid is described by the Navier-Stokes equations with Reynolds number $Re=1$, which are solved using projection method of Kim and Moin \cite{kimmoin85} on a two-dimensional rectangular grid with no-slip boundary conditions, while the level set function is only constructed and updated in a tube that contains the interface. In its undeformed state the membrane has an arc length parameter $\xi$, where $0\leq \xi< 2\pi$. To begin the simulation the membrane is initially stretched to an ellipse given by $X(\xi)=(2\cos(\xi), \sin(\xi))$. The system evolves under the tension generated by the curve and the stretching energy is given again by linear elasticity, $E(\chi)=(K/2)(\chi-1)^2$ with $K=5$. 

Figs.~\ref{fig:phisimu}(a,d) show the initial level set function and its contours, with the interface shown as a dark black line, on the domain $[-4,4]^2$ with grid size $h=8/100$. As the membrane relaxes over time, the level set function evolves to the one shown in Figs.~\ref{fig:phisimu} (b,e). The level set function is no longer adequate: the interface is close to the boundary of the tube where the local level set method is applied, and the level set function has a large gradient in the computational domain (though not yet at the interface). At this point we perform the LGPR algorithm in another tube that contains the new interface. The resulting level set is shown in Figs.~\ref{fig:phisimu}(c,f). The new level set function is better behaved and the sources of possible numerical error associated with membrane encroachment of the boundary have vanished. The $x$-component of the force before and after reinitialization is shown in Fig.~\ref{fig:fxsimu}. The force is preserved with the expected error. 

\begin{figure}[htbp]
\begin{center}
\includegraphics[width=\textwidth]{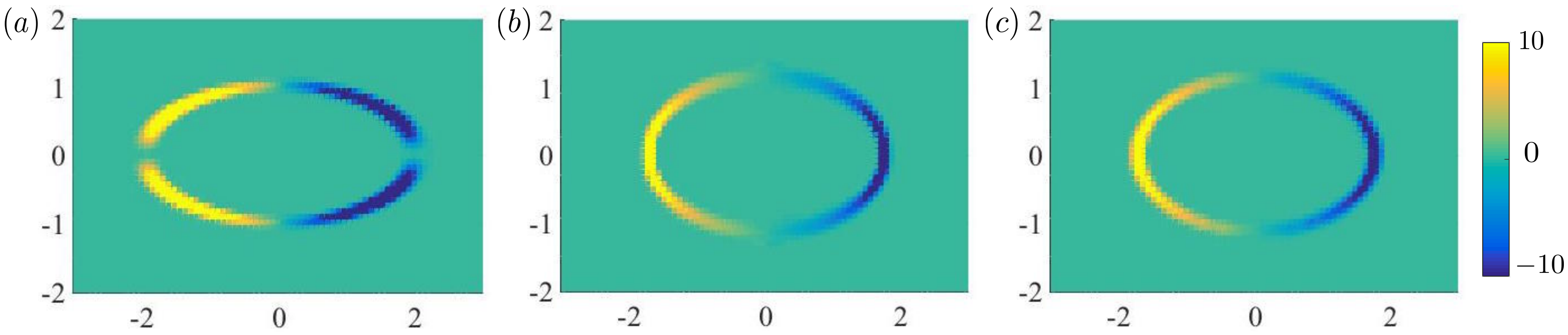}
\caption{The $x$-component of the force from Example 2. (a) The initial force distribution. (b) The force after partial relaxation but before level set reinitialization (c) The force after reinitialization is preserved through the LGPR process.}
\label{fig:fxsimu}
\end{center}
\end{figure}

\subsection{Example 3: Constructing the level set function}
As explained in Section \ref{sec:app}, LGPR is used to initialize the level set function from a given parametric curve and we have done this in example 2. We now focus on the behavior of LGPR in initializing a level set function. We take the interface in the previous example (Recall that we require that the level set function satisfies $|\nabla\phi|(X(\xi))=|X_{\xi}(\xi)|$).  

\begin{figure}[htbp]
\begin{center}
\includegraphics[width=\textwidth]{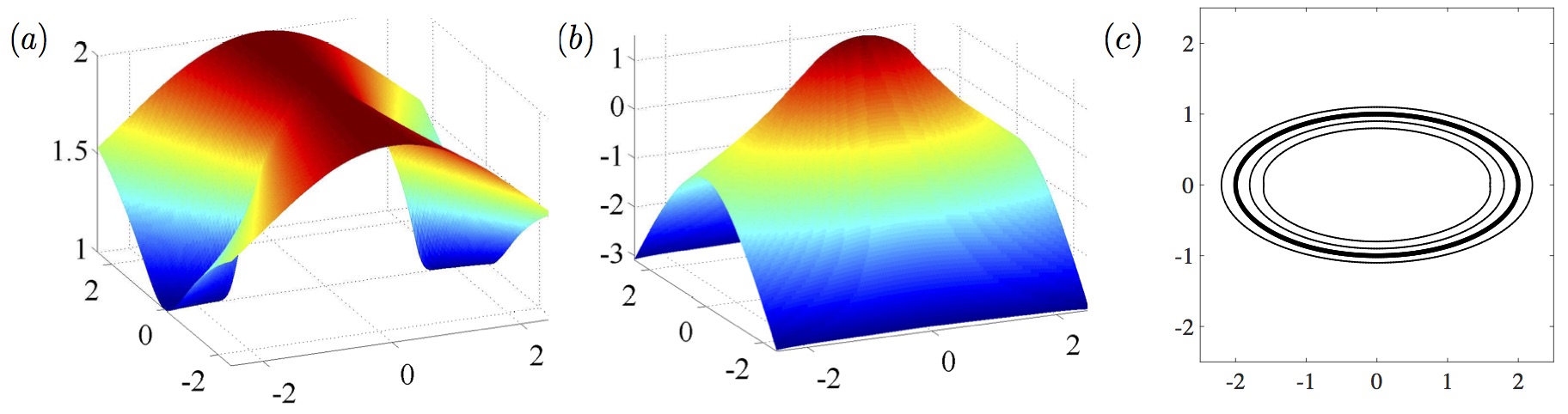}
\caption{(a) The cost function constructed from the initial parametrized curve from Example 2. (b) The constructed level set function. (c) Contours of the level set function; the bold line is the zero level set, or membrane interface.}
\label{fig:fromcurve}
\end{center}
\end{figure}

Fig.~\ref{fig:fromcurve} shows the numerical results on the domain $[-2.5, 2.5]^2$ with spatial grid size $h=5/128$. The characteristic lines intersect on the medial axis of the ellipse and our extension scheme gives good results at the medial axis. The interface is well captured by our constructed level set function (Fig.~\ref{fig:fromcurve}(c)).  To test the accuracy of the location and the interface gradient, we plot in Fig.~\ref{fig:strerr_ex2} the location error $\mathcal{E}_L=\max_p\{|x_p^2/4+y_p^2-1|\}$ and the gradient error $\mathcal{E}_G=\max_p\{||\nabla\phi|(p)-\sqrt{4y_p^2+x_p^2/4}|\}$, where $p$ is a point on the interface, $p=(x_i, y_{\Gamma})$ or $p=(x_{\Gamma}, y_j)$. The interface location is retained with roughly third order accuracy while the interface gradient is retained with second order accuracy, as desired.
\begin{figure}[htbp]
\begin{center}
\includegraphics[width=0.85\textwidth]{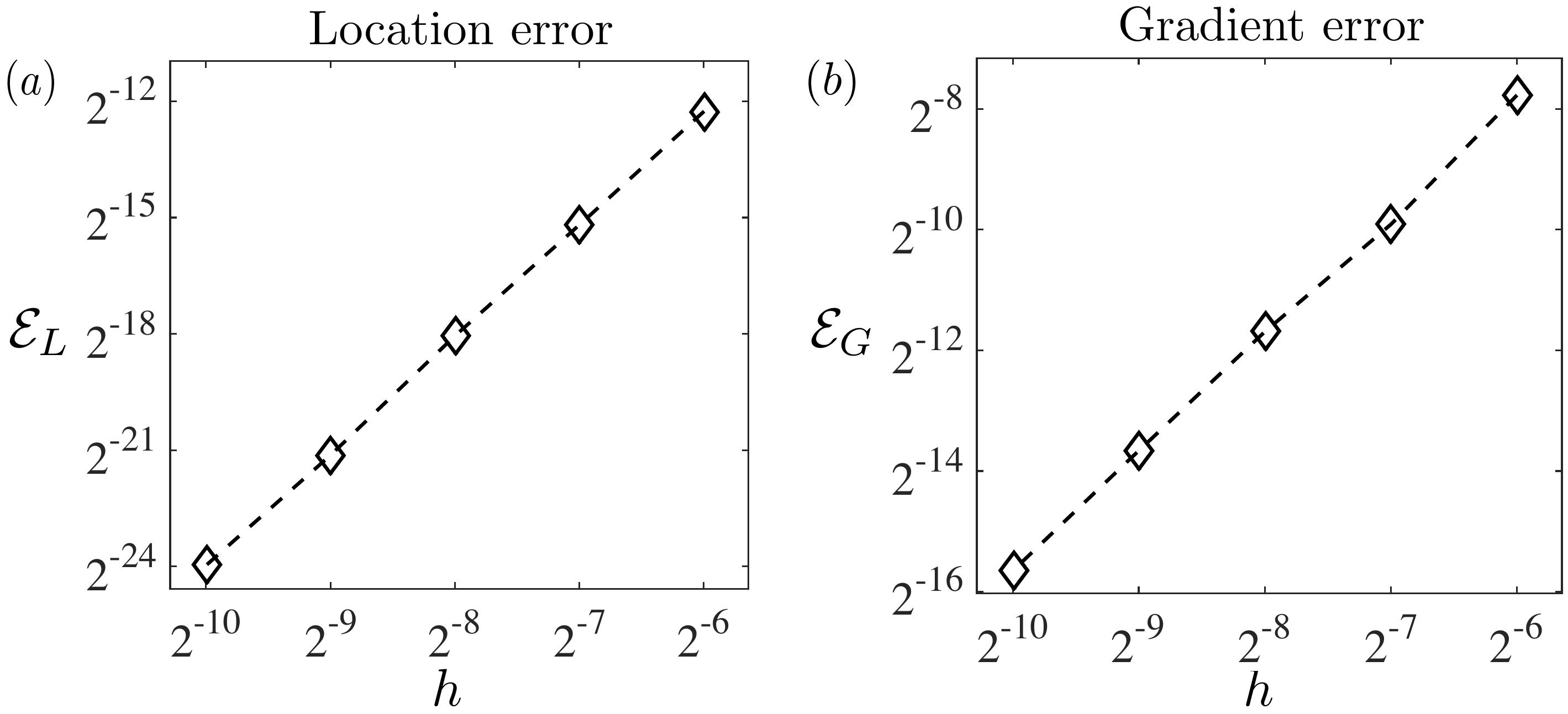}
\caption{(a) The interface location error for the constructed level set function in Example 3 decays as $O(h^3)$. (b) The interface gradient error for the constructed level set function decays as $O(h^2)$.}
\label{fig:strerr_ex2}
\end{center}
\end{figure}

\section{Conclusion}\label{sec:conc}
We have extended the traditional reinitialization process of a level set function to a locally gradient-preserving reinitialization method, which preserves not only the interface location, but also information about tangential interface stretching. We have shown in theory that the proposed method can correctly yield a desired level set function. In particular, we have shown that the viscosity solutions of the reinitialization equation converges to the unique ``proper'' viscosity solution of the Eikonal equation, which is the desired level set function. Numerical schemes are proposed to solve these PDEs. The subcell resolution method for the transport equation is easy to implement and can extend the interface gradient to the whole domain with high accuracy. Numerical examples show that our method is successful, even for discontinuous cost functions. In applications (Eulerian immersed boundary method, for example), a local level set method is desirable for computational cost reductions, and the LGPR method can be applied in a small region containing the interface $\Gamma$. This reinitialization process will make possible the Eulerian formulation based on a local level set method for simulating physical problems where the force depends on stretching in addition to bending. Further details on this particular application will be the topic of a separate paper.


\bibliographystyle{siam}
\bibliography{gradlevelset}
\newpage
\appendix
\section{Cut locus of the interface}

We first show that the signed distance function $\varphi$ is not differentiable at any point in $A$. 
{\bf
\begin{lemma}
Under the assumption we stated in \S\ref{sec:theory}, $\varphi$ is not differentiable at any point in $A$.
\end{lemma}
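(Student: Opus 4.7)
The plan is to give the classical proof that the distance function fails to be differentiable at any point where its nearest-point projection is multivalued, adapted to the signed distance $\varphi$. Fix $x_0\in A$, so that by definition there exist two distinct boundary points $y_1,y_2\in Px_0$ with $|x_0-y_1|=|x_0-y_2|=d(x_0,\Gamma)$. Since $A\subset\bar A$ and we have already established $d(\bar A,\Gamma)>0$, the point $x_0$ lies strictly away from $\Gamma$, so $\sgn(\phi_0)$ is constant on a neighborhood of $x_0$; call this constant sign $s\in\{+1,-1\}$, and note $\varphi(x)=s\,d(x,\Gamma)$ in that neighborhood.

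Next, I would introduce the auxiliary smooth functions $h_i(x)=s\,|x-y_i|$ for $i=1,2$. Because $y_i\in\Gamma$, one has $d(x,\Gamma)\le|x-y_i|$ everywhere, with equality at $x=x_0$. Translating this through the sign $s$, the function $h_i-\varphi$ attains a local minimum at $x_0$ when $s=+1$, and $\varphi-h_i$ attains a local minimum at $x_0$ when $s=-1$. In either case, if $\varphi$ were differentiable at $x_0$ (in the classical sense), then the first-order condition at the minimum of the difference with a smooth function gives
\begin{equation*}
\nabla\varphi(x_0)=\nabla h_i(x_0)=s\,\frac{x_0-y_i}{|x_0-y_i|},\qquad i=1,2.
\end{equation*}

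The conclusion follows immediately: the right-hand side is a unit vector pointing from $y_i$ toward $x_0$ (up to the common sign $s$), so $y_1\neq y_2$ and $|x_0-y_1|=|x_0-y_2|$ force the two vectors to differ. This contradicts the uniqueness of $\nabla\varphi(x_0)$, so $\varphi$ cannot be differentiable at $x_0$.

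I do not expect a genuine obstacle here; the argument is short and only uses (i) the $y_i\in\Gamma$ upper bound on $d(\cdot,\Gamma)$, (ii) the smoothness of $|x-y_i|$ away from $y_i$ (guaranteed by $x_0\notin\Gamma$), and (iii) the fact that $\sgn(\phi_0)$ is locally constant near $x_0$, which in turn uses the separation $d(\bar A,\Gamma)>0$ proved earlier. The only mild subtlety worth flagging in the write-up is keeping track of the sign $s$ so that the inequality $h_i\ge\varphi$ or $\varphi\ge h_i$ goes the correct way; otherwise this is a direct verification.
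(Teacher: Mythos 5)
Your proof is correct and reaches the same contradiction as the paper's: differentiability of $\varphi$ at a point of $A$ would force $\nabla\varphi(x_0)$ to equal the unit vector $s\,(x_0-y_i)/|x_0-y_i|$ for each of two distinct nearest points $y_i$, which is impossible. The only difference is mechanical --- you obtain that identity by touching $\varphi$ with the smooth majorant/minorant $s\,|x-y_i|$ and applying the first-order condition at the point of contact, whereas the paper perturbs $x_0$ along $\hat{n}=\nabla\varphi(x_0)$ and invokes the law of cosines --- so this is essentially the same argument.
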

}
\begin{proof}
Since $\bar{A}\cap\Gamma=\emptyset$, to show that $\varphi$ is not differentiable we show that $d(x,\Gamma)$ is not differentiable at $x\in A\cap U$. (The treatment of $A\cap U^c$ is similar.) By the definition of $A$, we can find $y_1, y_2\in \Gamma$, $y_1\neq y_2$ so that $d(x, y_1)=d(x, y_2)=\varphi(x)$. Suppose that $\varphi$ is differentiable at $x$. We must have $|\nabla\varphi(x)|=1$. Let $\hat{n}=\nabla\varphi(x)$. We have $x+\e \hat{n}\in U$ for small enough $\e>0$. The inequality $d(x+\e\hat{n}, y_1)\geq \varphi(x+\e\hat{n})>\varphi(x)$ then gives
\begin{gather}
d(x+\e\hat{n}, y_1)-d(x, y_1)\geq \e +o(\e),
\end{gather}
and the law of cosines tells us that $\angle(\hat{n}, x-y_1)=0$. But the same argument indicates that $\angle(\hat{n}, x-y_2)=0$, leading to a contradiction. 
\end{proof}
We now show that if $U$ is convex, $d(\bar{A}, \Gamma)=d(A,\Gamma)\ge \inf\{1/\kappa\}>0$. This follows from the following lemma about the local structure of the curve satisfying Assumption \ref{ass:gamma}.
\begin{lemma}\label{lemma:bdd}
For any $a\in A$ and any two points $y,z\in P(a)$, if the portion of $\Gamma$ between $y$ and $z$ is the graph of a function over the segment $yz$, then  $\exists\, w$ on the portion such that $\kappa(w)\mu(a)\ge 1/d(a,\Gamma)$ where $\mu(a)=1$ if $a$ is inside $\Gamma$ and $\mu(a)=-1$ otherwise.
\end{lemma}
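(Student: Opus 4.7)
The plan is to reduce the statement to a one-dimensional second-derivative comparison between $\gamma$ and a circular arc of radius $r := d(a,\Gamma)$ centered at $a$, and then to convert the resulting graph-curvature bound into the paper's sign convention. Place coordinates so that $y = (0,0)$, $z = (L,0)$ with $L = |y-z|$, and $a = (L/2, -d_0)$ where $d_0 = \sqrt{r^2 - L^2/4}$. Write $\gamma$ as the graph $y = h(x)$, $x \in [0,L]$, and let $g(x) := -d_0 + \sqrt{r^2 - (x-L/2)^2}$ denote the arc of $C := \partial B(a, r)$ lying above $yz$, so that $g(0) = g(L) = 0$.

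First I would verify $h \ge g$ on $[0,L]$. Since $y \in P(a)$ and $\Gamma$ is $C^1$, $\Gamma$ must be tangent to $C$ at $y$ and $z$ (otherwise one could slide along $\Gamma$ from $y$ to strictly decrease the distance to $a$); this forces $h'(0) = L/(2d_0) > 0$ and $h'(L) = -L/(2d_0) < 0$, so $h > 0$ near the endpoints. The constraint $\gamma \cap B(a, r) = \emptyset$ forbids $h$ from entering the open strip $g_-(x) < y < g(x)$ (with $g_-$ the lower semicircular arc of $C$), which by continuity of $h$ prevents any sign change, so $h > 0$ on $(0, L)$. The outside-of-$B(a,r)$ condition then reads $(h + d_0)^2 \ge r^2 - (x-L/2)^2$ with $h + d_0 > 0$, giving $h \ge g$.

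Next I apply a standard second-derivative comparison at a maximum of $h - g$. If $h \equiv g$ then $\gamma$ coincides with an arc of $C$ and the conclusion is immediate. Otherwise $h - g$ is strictly positive at a strictly interior maximum, and using the piecewise-analytic structure of $\Gamma$ (only finitely many non-$C^2$ points) we may take this maximum at a $C^2$ point $x_0 \in (0, L)$. There $h'(x_0) = g'(x_0)$ and $h''(x_0) \le g''(x_0)$. A direct computation gives $g''/(1 + g'^2)^{3/2} \equiv -1/r$ on the upper semicircle, so
\begin{equation*}
\frac{h''(x_0)}{(1 + h'(x_0)^2)^{3/2}} \le -\frac{1}{r},
\end{equation*}
whence $h''(x_0) < 0$ and $|h''(x_0)|/(1 + h'(x_0)^2)^{3/2} \ge 1/r$. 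Set $w := (x_0, h(x_0))$.

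Finally I translate to the paper's sign convention. Since $y$ is a closest point on $\Gamma$ to $a$, the inward normal at $y$ satisfies $\hat n(y) = \mu(a)\,(a - y)/|a - y|$, and a quick check shows this equals the $90^\circ$ clockwise rotation of the tangent $T(y)$ in Case 1 ($\mu = 1$) and the counter-clockwise rotation in Case 2 ($\mu = -1$). By continuity of the unit normal along the $C^1$ curve $\Gamma$, the same rotation relation holds at $w$. Since $h''(x_0) < 0$, the curvature vector $T'(w)$ points in the clockwise-rotated direction of $T(w)$, so
\begin{equation*}
\kappa(w) = \mu(a)\,\frac{|h''(x_0)|}{(1 + h'(x_0)^2)^{3/2}},
\end{equation*}
and combining with the bound above gives $\kappa(w)\,\mu(a) \ge 1/r = 1/d(a,\Gamma)$. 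I expect the main obstacle to be the sign bookkeeping in this last step (keeping track of inward-normal orientation, curvature-vector direction, and the sign of $\mu(a)$ simultaneously); the comparison at the maximum and the placement of $x_0$ at a smooth point are routine given the assumptions on $\Gamma$.
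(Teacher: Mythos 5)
Your proposal is correct in its overall strategy but takes a genuinely different route from the paper's. The paper's argument is an integral mean-value one: writing the arc as a graph over the chord, it computes $\mu(a)\int_{y_1}^{z_1}\kappa\,dx_1=2\sin\theta$ (the total change of $\sin\psi$, with $\psi$ the tangent angle, between the two tangency points) and pairs this with the chord length $z_1-y_1=2d(a,\Gamma)\sin\theta$, so the average of $\mu(a)\kappa$ over the chord is exactly $1/d(a,\Gamma)$ and a point with $\mu(a)\kappa\ge 1/d(a,\Gamma)$ follows at once. You instead run a pointwise comparison principle: establish the barrier $h\ge g$ against the circular arc, pass to an interior maximum of $h-g$, and compare second derivatives there. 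Both are sound; the integral version is shorter and automatically robust to the finitely many points where $\kappa$ is undefined (if $\mu(a)\kappa<1/d(a,\Gamma)$ held at a.e.\ point the integral identity would be violated), while your version localizes the conclusion to an explicit point $w$ at the top of the bulge, which is geometrically more informative. Your sign bookkeeping at the end (inward normal $=\mu(a)(a-y)/|a-y|$, concavity of $h$ at $x_0$, hence $\kappa(w)=\mu(a)\,|h''|/(1+h'^2)^{3/2}$) is consistent with the paper's convention that circles have positive curvature.

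The one step you should not wave away is ``we may take this maximum at a $C^2$ point.'' That is not automatic: $h-g$ can attain its maximum \emph{only} at one of the finitely many junction points where $\Gamma$ fails to be $C^2$, and deleting a finite set does not relocate a maximum. The fix is cheap: at such a point $x_0$ the function $h-g$ is still $C^1$ with one-sided second derivatives (by the piecewise analyticity in Assumption 1), and a one-sided Taylor expansion at the maximum gives $h''(x_0^{\pm})\le g''(x_0)$, hence the bound for the one-sided curvature limits, which Assumption 1 guarantees exist; alternatively, fall back on the integral identity above to produce a genuine smooth point $w$. A second, smaller edge case is $|y-z|=2d(a,\Gamma)$ (antipodal tangency, $d_0=0$), where your formula $h'(0)=L/(2d_0)$ degenerates; the paper's computation with $\theta=\pi/2$ covers it without modification.
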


\begin{proof}We take the straight line $yz$ to be the $x_1$-axis, and assume at point $z$ that the tangent line of $\Gamma$ has a positive slope with angle $\theta$. The projections of $y$ and $z$ to the $x_1$-axis are denoted by $y_1$ and $z_1$. As the portion of $\Gamma$ between $y$ and $z$ is the graph of a function of $x_1$, then so is $\kappa$: $\kappa=\kappa(x_1)$. By integrating $\kappa$ between $y_1$ and $z_1$ ($\kappa$ is integrable by the assumption on the interface), we have
\begin{gather}
\mu(a)\int_{y_1}^{z_1}\kappa(x_1)dx_1=2\sin(\theta).
\end{gather}
By basic trigonometry, $z_1-y_1=2d(a,\Gamma)\sin(\theta)$. Hence, we can find $\tilde{x}_1\in[y_1, z_1]$ so that
\begin{gather}
\mu(a)\kappa(\tilde{x}_1)\ge \frac{2\sin(\theta)}{2d(a,\Gamma)\sin(\theta)}=\frac{1}{d(a,\Gamma)}.
\end{gather}
\end{proof}

Although we will not use the following ``no cusp'' property in this paper, this is still an interesting property of the geometry of cut locus. This property is especially interesting when we discuss uniqueness of the viscosity solution to the Eikonal equation.\\

\begin{theorem}\label{thm:cusp}
For locally analytical boundary $\Gamma$, there is no cusp with zero angle in $\bar{A}$.
\end{theorem}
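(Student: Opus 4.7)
I would argue by contradiction. Suppose $p\in\bar{A}$ is a cusp with zero angle. By Lemma \ref{lmm:A}, near $p$ the set $\bar{A}$ consists of finitely many locally analytic open curves meeting at finitely many vertices, each point on these open curves having projection of size exactly $2$. A zero-angle cusp at $p$ must therefore arise from two distinct analytic branches $\gamma_1,\gamma_2\subset\bar{A}$ emanating from $p$ with a common unit tangent direction $\hat{t}$, both issuing into the same half-plane.

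Next I would analyze the projection structure. Parameterize each branch by arc length as $a_i(s)$ with $a_i(0)=p$ and $a_i'(0^+)=\hat{t}$. For $s>0$, $Pa_i(s)$ has exactly two elements which vary continuously, so $Pa_i(s)=\{q_i(s),y_i(s)\}$ with limits $q_i,y_i\in Pp$. Using the cellular structure of the medial axis near a vertex inherited from \cite{ccm97} (consecutive branches around $p$ bound open cells whose points share a common limiting projection to a point of $Pp$), the two cusp-branches $\gamma_1,\gamma_2$, being adjacent across the thin wedge, must share a common limit projection; write $q_1=q_2=q$. Differentiating $|a_i(s)-q_i(s)|^2=|a_i(s)-y_i(s)|^2$ in $s$ and using that $q_i(s),y_i(s)$ are minimizers of distance to $\Gamma$ (so the derivative terms involving $q_i'(s),y_i'(s)$ drop, since $a_i(s)-q_i(s)$ and $a_i(s)-y_i(s)$ are parallel to outward normals of $\Gamma$), one obtains $(y_i-q)\cdot\hat{t}=0$ for $i=1,2$.

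Consequently $q-y_1$ and $q-y_2$ are both perpendicular to $\hat{t}$, so $q,y_1,y_2$ are collinear. But each of these three points lies on the circle $\partial B(p,d(p,\Gamma))$, since each realizes the minimum distance from $p$ to $\Gamma$; a line meets that circle in at most two points, so two of $q,y_1,y_2$ must coincide. If $y_i=q$ for some $i$, then the projection pair of points on $\gamma_i$ near $p$ collapses to a single limit, violating Lemma \ref{lmm:A}. If $y_1=y_2$, then $\gamma_1$ and $\gamma_2$ are locally both the bisector of the same pair of analytic arcs of $\Gamma$ (those through $q$ and $y_1$); by uniqueness of the analytic bisector, $\gamma_1=\gamma_2$, contradicting distinctness. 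Either case yields a contradiction.

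The principal obstacle will be making the cellular claim rigorous, namely that the two branches forming the cusp necessarily share a common limiting projection. I would handle this by combining the detailed skeleton description from \cite{ccm97} with Lemma \ref{lemma:proj} (which shows that open segments from any point to its projections avoid $\bar{A}$) to track which projection each local cell carries, and to rule out configurations in which two adjacent branches have disjoint projection pairs. Once this cellular structure is secured, the tangent computation and the collinearity-on-a-circle argument above close the proof.
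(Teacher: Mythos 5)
Your overall strategy is genuinely different from the paper's: you try to locate the two limiting projections of each branch relative to the cusp tangent $\hat{t}$ (perpendicularity of the chord $y_i-q_i$ to $\hat{t}$, then collinearity of three points on the circle $\partial B(p,d(p,\Gamma))$), whereas the paper shows that the cusp direction is \emph{aligned} with a projection ray $(y-p)/|y-p|$ for some $y\in Pp$ and then derives a quantitative contradiction with the smoothness of the curvature of $\Gamma$ near $y$ via Lemma~\ref{lemma:bdd} (all projections of $\gamma_1(s_k)$ cluster at $y$ at rate $O(s_k^2)$, which forces $|\kappa(u_k)-\kappa(0)|\ge Cs_k$, impossible). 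Your differentiation step giving $(y_i-q_i)\cdot\hat{t}=0$ is correct on the open branches, but the argument as written has two genuine gaps.

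First, the case you label ``$y_i=q$'' is not excluded by Lemma~\ref{lmm:A}. That lemma only says that points \emph{on the open curves} have projections of size two; it says nothing about the limits of those two projections as $s\to0^+$. The two projections $q_i(s)\neq y_i(s)$ can legitimately collapse to a single point in the limit: this happens exactly when the branch terminates at a center of curvature of $\Gamma$ (the generic endpoint of a medial axis, e.g.\ the endpoints of the medial axis of an ellipse). In that configuration your circle/collinearity argument yields no information, and you have not ruled out that such a terminating branch forms a zero-angle cusp with another branch. The paper's curvature estimate handles this case uniformly because it only needs $\#P\gamma_1(s_k)\ge2$ with all projections within $O(s_k^2)$ of $y$. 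Second, the ``cellular'' claim that the two cusp branches share a common limiting projection $q_1=q_2$ is the linchpin of your collinearity step and is only asserted; it requires an argument (e.g.\ that the thin wedge between the branches is free of $\bar{A}$, that the projection map is continuous there, and that the connected image of the shrinking wedge must collapse onto a single point of $Pp$ — which itself needs $Pp$ to be totally disconnected, failing if $\Gamma$ contains a circular arc centered at $p$). The remaining case $y_1=y_2$ via uniqueness of the analytic bisector is plausible but also needs $p$ not to be a focal point of either arc of $\Gamma$, which circles back to the first gap. Until the collapsing-projection case is handled — for instance by importing something like the paper's Lemma~\ref{lemma:bdd} — the proof is incomplete.
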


\begin{proof}
We will sketch the proof without details. Let us \deleted{We} take $\bar{A}\cap U$ as the example. Suppose that there are two edges making a zero angle at $x\in \bar{A}\cap U$. Parametrize them with arc lengths: $\gamma_1(s)$ and $\gamma_2(s)$, such that $\gamma_1(0)=\gamma_2(0)=x$,$\gamma_1'(0)=\gamma_2'(0)=:\hat{n}$ and that $\forall \e>0$, $\exists s_{\e}\in(0,\e)$, $\gamma_1(s_{\e})\neq \gamma_2(s_{\e})$. By Lemma \ref{lemma:proj}, there is an opening region between $\gamma_1$ and $\gamma_2$. Choose a sequence $x_k$ approaching $x$ inside the region such that the projection of $Px_k$ (which is one point) approaches $y$. Then $y\in P(x)$. By Lemma \ref{lemma:proj}, $x_k$-$Px_k$ doesn't intersect with $\bar{A}$. As a limit line segment, $x$-$y$ doesn't cross $\gamma_1,\gamma_2$ and thus is tangent to the two curves, namely $\hat{n}=(y-x)/|y-x|$. $\gamma_1$ and $\gamma_2$ must be on the two sides of the line $xy$. Consider $\gamma_1$ and the portion of $\Gamma$ on that side, parametrized as $\Gamma_0(u)$ with $\Gamma_0(0)=y$. Then there exists a $\delta>0$ such that the curvature is a smooth function on $[0,\delta]$ (if necessary, redefine $\kappa(0)=\lim_{u\to 0^+}\kappa(u)$). Choose $s_k\to 0^+$ such that $\#P\gamma_1(s_k)\ge2$. From the fact that $x-y$ is the tangent line of $\gamma_1$ at $x$, $x-\gamma_1(s_k)$ and $\gamma_1(s_k)-y$ are almost parallel for sufficiently large $k$. Then, for $k$ large enough, $w_k\in P\gamma_1(s_k)$, $d(x,w_k)\ge d(x,y)$ implies that $w_k$ must fall onto $\Gamma_0(s)$ for $0\le s\le \delta$. Using $\gamma_1(s_k)=x+s_k\hat{n}+O(s_k^2)$ and $P(x+s_k\hat{n})=y$, we know $\forall w_k\in P\gamma_1(s_k)$ that $|w_k-y|=O(s_k^2)$.  By Lemma \ref{lemma:bdd} and since $\delta$ is small, there is a $u_k$ such that $\Gamma_0(u_k)$ is between two points in $P\gamma_1(s_k)$ and $\kappa(u_k)\ge 1/d(\gamma_1(s_k), \Gamma)$. It is clear that $u_k=O(s_k^2)$. The contradiction is obtained by noticing that $\kappa(0)\le 1/d(x,y)$ and $O(s_k^2)=|\kappa(u_k)-\kappa(0)|\ge |1/d(\gamma_1(s_k), \Gamma)-1/d(x,y)|\ge Cs_k$ for some $C>0$.
\end{proof}

\section{Solutions of Eikonal equation with discontinuous cost function}

In this section, we will prove the existence and uniqueness of the viscosity solution to \eqref{eq:eikonal} (a.k.a., Theorem \ref{thm:37}). 

The existence part of the proof is quite standard. However, the proof of the uniqueness is different from the standard argument. One may find that there is one essential difference in our problem: in our case, the cut locus may have bifurcation points. Which means, there could exist point $x$ in the singular set of cost function, such that for any disc $B$ centering at $x$, $B$ will be divided by the singular set into more than two parts. This makes the arguments in \cite{ostrov00,soravia02, soravia06, de04} invalid.\\
On the other hand, there is another subtle difference in our proof. Although from Theorem \ref{thm:cusp}, there is no cusp in the singular set of the cost function, we will not use this. In other words, our proof doesn't need the classical ``no cusp'' assumption for the singular set of cost function.

\subsection{Existence}
We take $x\in U$ as an example case. We show that $\phi_M$ is a viscosity solution; a similar argument applies to $\phi_m$. We have the following relationships:
\begin{multline}
c_1\leq\essinf\{f(y)| y\in B(x,\sqrt{(c_2-c_1)\e})\}
\leq f_{\e}(x)\leq f_*(x)\\
\leq f^*(x)\leq f^{\e}(x)\leq\esssup\{f(y)| y\in B(x,\sqrt{(c_2-c_1)\e})\}\leq c_2.
\end{multline}
$f_{\e}$ increases to $f_*$ while $f^{\e}$ decreases to $f^*$, and $f^{\e}$($f_{\e}$) is continuous. The corresponding solution $\phi^{\e}$ is given as in \eqref{eq:valuefunc}. One then could verify the following dynamic programming principle: for $x\in U$ and $0\leq s_1\leq d(x, \Gamma)$,
\begin{gather}
\phi^{\e}(x)=\inf_{\gamma\in \mathscr{C}}\left\{\int_{0}^{s_1}f^{\e}(\gamma(s))ds+\phi^{\e}(\gamma(s_1))|\gamma(0)=x\right\}.
\end{gather}
The proof of this principle is omitted here. For similar arguments, one could refer to Chapter 10 of \cite{evans10}.  A direct conclusion from this principle is that
\begin{gather}
\begin{array}{c}
0\leq \phi^{\e}\leq c_2 d(x,\Gamma)\\
|\phi^{\e}(x)-\phi^{\e}(y)|\leq c_2|x-y|
\end{array}
\end{gather}
By the Arzela-Ascoli theorem, since $\bar{U}$ is compact, there is a subsequence $\phi^{\e_k}$ that converges uniformly to $\bar{\phi}$.

Now suppose that $\bar{\phi}-\zeta$ has a local maximum at $x_0\in U$. For small $\delta>0$, $\bar{\phi}-\zeta-\delta|x-x_0|^2$ has a strict local maximum at $x_0$. There is a sequence of local maxima $x_k$ for $\phi^{\e_k}-\zeta-\delta |x-x_0|^2$ that converges to $x_0$ by the uniform convergence of the function sequence. Fixing $K>0$, for $k>K$, we have 
\begin{gather}
|\nabla\zeta(x_k)+2\delta(x_k-x_0)|\leq f^{\e_k}(x_k)\leq f^{\e_K}(x_k).
\end{gather}
Letting $k\to\infty$, since $f^{\e_K}$ is continuous, $|\nabla\zeta(x_0)|\leq f^{\e_K}(x_0)$. Sending $K\to\infty$, we obtain
$|\nabla\zeta(x_0)|\leq f^*(x_0)$. For a local minimum at $x_0$, the argument is similar. Here, we just use the modified function $\bar{\phi}-\zeta+\delta |x-x_0|^2$ and the inequalities
\begin{gather}
|\nabla\zeta(x_k)-2\delta(x_k-x_0)|\geq f^{\e_k}(x_k)\geq f_{\e_k}(x_k)\geq f_{\e_K}(x_k)
\end{gather}
for $k>K$. Thus, $|\nabla\zeta(x_0)|\geq f_*(x_0)$ and $\bar{\phi}$ is a viscosity solution.

Now, fixing any $\delta>0$, we can find $K>0$ so that $\bar{\phi}(x)+\delta>\phi^{\e_k}(x)$ when $k>K$ for any $x\in U$. However,  $\exists\,\gamma$ with $\gamma(0)=x$ such that $\phi^{\e_k}+\delta>\int_0^Lf^{\e}(\gamma(s))ds\geq \int_0^Lf^*(\gamma(s))ds$. ($f^*(\gamma(\cdot))$ is the infimum of continuous functions and thus Lebesgue measurable on $[0, T]$.) Meanwhile, $\bar{\phi}(x)-\delta\leq \phi^{\e_k}(x)\leq \int_0^L f^{\e_k}(\gamma(s))ds$ for $k$ large enough and any $\gamma\in\mathscr{C}$ with $\gamma(0)=x$ and $\gamma(L)\in\Gamma$. Now fixing $\gamma$ and taking $k\to\infty$, the dominant convergence theorem tells us that
$\bar{\phi}(x)-\delta\leq \int_0^Lf^*(\gamma(s))ds$. Hence:
\begin{gather}
\bar{\phi}(x)=\inf_{\gamma\in\mathscr{C}}\left\{\int_0^{L}f^*(\gamma(s))ds |\gamma(0)=x, \gamma(L)\in\Gamma \right\}=\phi_M(x).
\end{gather}

The dynamic programming principle for $\phi_M$ is still true and $\phi_M$ is also bounded and Lipschitz continuous with the same constants.
\subsection{Uniqueness}
We again fix $x\in U$, and we now show that $\phi_m(x)=\phi_M(x)$.  For any $\e>0$, we can find $\gamma\in \mathscr{C}$ with $\gamma(0)=x$ and $\gamma(L)\in\Gamma$ so that $\int_0^Lf_*(\gamma(s))ds<\phi_m(x)+\e$. $\gamma$ has no self-intersection by the definition of $\mathscr{C}$.

By Lemma \ref{lmm:A}, except at finitely many points, all the points in $\bar{A}$ belong to some locally analytical curves and have a projection with size $2$. Noticing that $\gamma$ is an injection, we pick a set $E$ that covers the finite irregular points such that the total length of $\gamma$ falling into $E$ is less than $\e/c_2$, where $c_2$ is the upper bound of $f^*$. Then the remaining part $\bar{A}\setminus E$ has the following properties: it is the disjoint union of $N$ edge portions $e_n, 1\le n\le N$; for any $x\in e_n$, we can find a ball $B(x, r_x)$ ($r_x>0$) so that every point in $\bar{A}\cap B(x,r_x)$ has a projection of size $2$ and $\bar{A}\cap B(x,r_x)$ is real analytic. \\
\\
{\it Step 1. We first show that the cost function $f$ has limits on both sides of the edge $e_n$.} \\

For any $x\in e_n$, $B(x,r_x)$ is divided into two subdomains $B_1$ and $B_2$ by $\bar{A}$. Let $x_k\in B_1\cap U$, $x_k\to x$. The sequence $w_k=Px_k$ has a limit point $z\in Px$. Further inspection reveals that $z$ is the only limit point of $w_k$ since $\#Px=2$ and the sequences in $B_2$ give another. This means that for every sequence in $B_1$ converging to $x$, the projections converge to $z$. Hence, $\lim_{y\to x, y\in B_1} f(y)=\chi(z)$. If the limit function is $f_1$, by the continuity of projection on one side, $f_1$ is continuous on $e_n$. $f_2$ may be similarly defined.\\
\\
{\it Step 2. We now decompose $\bar{A}$ into several parts so that on each part $\int(f^*-f_*)(\gamma(s))ds$ can be dealt with appropriately.}\\

  Let $e_n$ be equipped with the $1$D Lebesgue measure $m$ induced by the arc length, and let $F_n=\{x\in e_n: f_1(x)=f_2(x)\}$. Clearly, $f^*=f_*$ on $F_n$ and $F_n$ is closed. The set $e_n\setminus F_n=\{x\in e_n: f_1-f_2>0, \mbox{or } f_1-f_2<0\}$ is open, thus is the disjoint union of countable subintervals in $e_n$. Since the sum of lengths of these subintervals is finite, we can find finitely many of them, say $M_n$ of them, such that the measure of the remaining is small. For these $M_n$ intervals, we can cover the endpoints and get $M_n$ new subintervals denoted as $I_i, 1\le i\le M_n$. Hence, we can decompose $e_n$ into $G_n$ with $m(G_n)<\e/Nc_2$, $F_n$ and $\cup_{i=1}^{M_n}I_i$. Let's consider all the $M=\sum_n M_n$ subintervals. $\exists\delta_1,\delta>0$ that depend on $E$, $F_n, G_n, 1\le n\le N$, such that each subinterval $I_i$ satisfies: $U_i=\{x: d(x, I_i)\le\delta_1\}$ is divided into two domains $V_{i1}, V_{i2}$, $f^*\in C(\bar{V}_{i2})$, $f_*\in C(\bar{V}_{i1})$, and $\inf_{x\in V_{i2}, y\in V_{i1}} f^*(x)-f_*(y)\ge\delta$. We have thus decomposed $\bar{A}$ into the union of following sets: $M=\sum_n M_n$ subintervals; a closed set on which $f_*=f^*$; and a set (union of $G_n$ and $E$) with measure less than $2\e/c_2$. 

Let $C_i=\{s: \gamma(s)\in I_i\}$ be a subset of $[0,L]$ and $C=\cup_{i=1}^MC_i$. It is clear that $\int_{[0,L]\setminus C}(f^*-f_*)(\gamma(s))ds<2\e$. Hence, we only need to study the integral on $C_i$. \\
\\
{\it Step 3. We now obtain a local property of the portion of $\gamma$ on $C_i$.}\\

 By the local smoothness of the edges, $\exists\alpha>0$, $\delta_3<\delta_1/2$, such that if $s_1, s_2\in C_i$, with $s_2-s_1\le \delta_3$, then the length of $I_i$ between $\gamma(s_1)$ and $\gamma(s_2)$ is at most $(s_2-s_1)+\alpha(s_2-s_1)^2$.  If $\exists s\in(s_1,s_2)$ so that $\gamma(s)\notin \bar{V}_{i1}$, then there's an subinterval $[s_3,s_4]\subset [s_1,s_2]$ such that $s\in [s_3,s_4]$ and $\gamma((s_3,s_4))\cap \bar{V}_{i1}=\emptyset$. Since $\delta_3<\delta_1/2$, $\gamma([s_3,s_4])$ can't leave $\bar{V}_{i2}$. Noticing $f^*=f=f_*$ on $\gamma((s_3,s_4))$, 
$$\int_{\gamma[s_3,s_4]}f_*ds=\int_{\gamma[s_3,s_4]}f^*ds>\int_{J}f_*ds+(s_4-s_3)\delta-\alpha(s_4-s_3)^2c_2$$ where $J$ is the part of $I_i$ between $\gamma(s_3)$ and $\gamma(s_4)$. If we pick $\delta_3$ small enough, we could have $(s_4-s_3)\delta-\alpha(s_4-s_3)^2c_2>0$. We replace $\gamma([s_3,s_4])$ with $J$ and get a new curve $\tilde{\gamma}$, and we see that $\int_{\tilde{\gamma}}f_*ds<\int_{\gamma}f_*ds$. $\tilde{\gamma}$ may be self-intersecting.  We can modify it as following: If $J$ intersects with $\gamma(0,s_3)$. we find the infimum of $s_{\min}$ on $[0,s_3]$ so that $\gamma(s_{\min})\in J$. Then we piece $\gamma[0, s_{\min}]$ and the part of $J$ starting from $\gamma(s_{\min})$ together. Then, by the same method we can deal with the case when $J$ intersects with $\gamma((s_4,L))$. Such $s_3,s_4$ pairs correspond to disjoint open intervals, so we can do this modification at most countable many times and get another curve $\gamma_1\in\mathscr{C}$, so that $\int_{\gamma_1}f_*ds<\int_{\gamma}f_*ds$. Hence, without loss of generality we can assume $\gamma$ satisfies this property: 

If $s_1, s_2\in C_i, |s_2-s_1|<\delta_3$, then $\gamma([s_1, s_2])\subset \bar{V}_{i1}$.\\
\\
{\it Step 4. With the property obtained, we perturb the curve defined on $C_i$ so that the integral on $C_i$ can be treated.}\\

$C_i$ is closed and consists of countable closed intervals (they are subintervals of $[0,L]$, different from the intervals on the edge portion) and a nowhere dense set $G_i'$ ($G_i'$ may have positive measure). Moreover $G_i=\bar{G}_i'$ is still nowhere dense since the extra possible points are the endpoints of the intervals. By the assumption above, we may write $G_i=\cup_{j=1}^{N_1} G_{ij}$ for some $N_1\in\mathbb{N}$. $[\inf G_{ij}, \sup G_{ij}]\setminus G_{ij}$ does not contain any interval of length $\ge\delta_3$ for any $j$, so that $\gamma([\inf G_{ij}, \sup G_{ij}]) \subset \bar{V}_{i1}$, and
$$
\inf_{v_1\in G_{ij_1}, v_2\in G_{ij_2}}|v_1-v_2|\ge\delta_3
$$
Now, assume $K$ is $G_{ij}$ or one of the intervals in $C_i$, and let $s_l=\inf K, s_r=\sup K$ and $\e_1>0$ be fixed. We can find finitely many points $s_{k}$ in $K$ and the difference between two consecutive points is less than $\delta_3$. We shift $\gamma([s_{k}, s_{k+1}])$ along the normal of $I_i$ at $\gamma(s_{k})$ toward $V_{i1}$ with distance $\delta_4$. Now, we add line segments to connect the endpoints. The shifted curve portions and line segments are all in $V_{i1}$ if $\delta_4$ is small enough. Denote the curve so obtained by $\gamma'$. By the uniform continuity of $f_*$ in $\bar{V}_{i1}$, we have $\int_{\gamma'}f_*ds<\int_{\gamma}f_*ds+\e_1$.  However, $\gamma'$ may be self-intersecting. We now modify it following an essentially similar procedure as before: $\gamma'$ consists of $\gamma[0,s_l]$, $\gamma[s_r, L]$ and the shifted curves together with line segments, denoted as $\tilde{P}$. Consider the first shifted curve portion with the line segment $\tilde{P}_{1}$. Suppose $k\ge 2$ is the largest number such that $\tilde{P}_{k}$ intersects with $\tilde{P}_{1}$. We find the first point on $\tilde{P}_1$ that is on $\tilde{P}_k$, then discard the part on $\tilde{P}_1$ after this point and all curve portions $\tilde{P}_l$ with $1<l<k$ and the part on $\tilde{P}_k$ that is before this point. Since the portions are finite, this process can be terminated, resulting in $P\subset\tilde{P}, P\in\mathscr{C}$ that connects $\gamma(s_l)$ and $\gamma(s_r)$. The remaining work is the same as how $J$ was modified before. Then, we find a new curve $\gamma_2$. $\int_{\gamma_2}f_*ds\le \int_{\gamma'}f_*ds$. By the construction, we have removed $K\setminus\{s_l,s_r\}$ from $C_i$ without adding new points. Such sets are countable, we can finish this process and obtain a curve $\gamma_3$, so that $C_i(\gamma_3)$ consists of countably many points and $\int_{\gamma_3}f_*ds-\int_{\gamma}f_*ds<\e/M$ since $\e_1$ is arbitrary. \\

Hence, we are able to construct a curve $\gamma_4$ with $\gamma_4(0)=x$ and $\gamma_4(L)\in\Gamma$ (since it has the same endpoints as $\gamma$) such that $\int_{\gamma_4}f^*ds\le\int_{\gamma_4}f_*ds+2\e<\int_{\gamma}f_*ds+3\e<\phi_m(x)+4\e$, which verifies the condition so that $\phi_m(x)=\phi_M(x)$.

\section{The solution of the reinitialization equation}

In this section we show
that the formula given in \eqref{eq:solhj} is a viscosity sub-solution of the level-set reinitialization equation. Similar argument shows that it is also a super-solution is similar.

Consider that $u-\zeta$, where $\zeta$ is $C^{\infty}$, has a local maximum at $(x_0, \tau_0)$ for $\tau_0>0$. We show that the sub-solution condition is satisfied. If $\tau_0>\tau_{x_0}$, then in a neighborhood of $(x_0,\tau_0)$, we have $\tau>\tau_x$ since as $\tau_x$ is continuous on $x$. The solution does not depend on $\tau$, and we also must have $\zeta_{\tau}(x_0,\tau_0)=0$. The sub-solution condition is satisfied for $x_0\notin\Gamma$ as the condition has already been verified for the Eikonal equation. If $x_0\in\Gamma$, $[\sgn(u_0)(|p|-g)]_*|_{p=\nabla\zeta(x_0), x=x_0}\leq 0$ is assured. 

Consider $0<\tau_0\leq \tau_{x_0}$. Then $u_0(x_0)\neq 0$ since for any $x\in\Gamma$, we have $\tau_x=0<\tau_0$. Take $u_0(x_0)>0$ (the result for $u_0(x_0)<0$ is similar). Let $h_1=\min\{\tau_0, d(x_0,\Gamma)\}>0$. There exists $0<h_2\leq h_1$, such that the dynamical programming principle holds for $h<h_2$:
\begin{gather}
u(x_0, \tau_0)=\inf_{\gamma}\left\{u(\gamma(h), \tau_0-h)+\int_0^hg(\gamma(s))ds |\gamma(0)=x_0 \right\}.
\end{gather} 
We now show this principle. Since $\tau_0\leq \tau_{x_0}$, for any $\e>0$, we can find $\gamma$ with $\gamma(0)=x_0$ such that
$u(x_0,\tau_0)+\e>u_0(\gamma(\tau_0))+\int_0^{\tau_0}g(\gamma(s))ds$. By the definition, $\int_h^{\tau_0}g(\gamma(s))ds+u_0(\gamma(\tau_0))\geq u(\gamma(h), \tau_0-h)$ whether or not $\tau_0-h>\tau_{\gamma(h)}$. `$\geq$' is thus shown.

We now show the other direction. Let $B=\inf_{\gamma}\{\int_0^Lg(\gamma(s))ds |\gamma(0)=x_0,\gamma(L)\in\Gamma\}$, and fix an arbitrary $\gamma\in\mathscr{C},\gamma(0)=x_0$. We will discuss both cases where either $\tau_0<\tau_{x_0}$ or not.

Consider $\tau_0<\tau_{x_0}$. Take $h_2\leq h_1$ small enough so that $|x-x_0|<h_2, |\tau-\tau_0|<h_2$ implies $\tau<\tau_x$ due to the continuity of $\tau_x$.  Let $h<h_2$. We can find $\gamma_1,\gamma_1(0)=\gamma(h)$ such that $u(\gamma(h), \tau_0-h)+\e>\int_0^{\tau_0-h}g(\gamma_1(s))ds+|u_0|(\gamma_1(\tau_0-h))$ since $\tau_{\gamma(h)}>\tau_0-h$. Connecting $\gamma(s): 0\le s\le h$ and $\gamma_1$, we find $\gamma_2$. $\gamma_2(\tau_0)=\gamma_1(\tau_0-h)$. Then, $u(x_0,\tau_0)\leq \int_0^{\tau_0}g(\gamma_2(s))ds+|u_0(\gamma_2(\tau_0))|<\int_0^hg(\gamma(h))+u(\gamma(h), \tau_0-h)+\e$.

We now assume that $\tau_0=\tau_{x_0}$ (recall that we are discussing the case where $\tau_0\le \tau_{x_0}$). We must have that $u(x_0,\tau_0)=B$. Let $h<h_1$. If $\tau_0-h\geq\tau_{\gamma(h)}$, then $\exists\gamma_1,\gamma_1(0)=\gamma(h)$, $\gamma_1(L)\in \Gamma$ where $L\leq \tau_0-h$ such that $u(\gamma(h),\tau_0-h)+\e>\int_0^Lg(\gamma_1(s))ds$. Then, connecting $\gamma(s): 0\le s\le h$ and $\gamma_1$, we get a new curve $\gamma_3$ with total length $h+L\leq\tau_0$. We then have $u(x_0,\tau_0)=B\leq \int_0^{L+h}g(\gamma_3(s))ds<\int_0^hg(\gamma(s))ds+u(\gamma(h),\tau_0-h)+\e$. If $\tau_0-h< \tau_{\gamma(h)}$, we can find $\gamma_1,\gamma_1(0)=\gamma(h)$ such that $u(\gamma(h), \tau_0-h)+\e>\int_0^{\tau_0-h}g(\gamma_1(s))ds+|u_0|(\gamma_1(\tau_0-h))$. Connecting $\gamma(s): 0\le s\le h$ and $\gamma_1$, we get $\gamma_2$. Since $\tau_0=\tau_{x_0}$, $B\le \int_0^{\tau_0}g(\gamma_2(s))ds+|u_0(\gamma_2(\tau_0))|$ by the definition of $u(x_0,\tau_0)$. The same argument as in the previous paragraph follows.

Combining what we have, the dynamic programming principle is true. With this principle the sub-solution condition is easily verified: since $u(x_0,\tau_0)-\zeta(x_0,\tau_0)\geq u(\gamma(h), \tau_0-h)
-\zeta(\gamma(h), \tau_0-h)$, we see that
\begin{gather}
\zeta(x_0,\tau_0)\leq \inf_{\gamma}\left\{\zeta(\gamma(h), \tau_0-h)+\int_0^hg(\gamma(s))ds |\gamma(0)=x_0 \right\},
\end{gather}
which implies that
\begin{gather}
\zeta_{\tau}(x_0,\tau_0)+|\nabla\zeta|(x_0,\tau_0)\leq g(x_0).
\end{gather}
\end{document}